\newtheorem{prop}{Proposition}
\newtheorem{lemma}{Lemma}
\newtheorem{definition}{Definition}
\newtheorem{theorem}{Theorem}
\newtheorem{remark}{Remark}
\def\real{{\mathord{{\rm I\kern-2.8pt R}}}}        
\def\inte{{\mathord{{\rm I\kern-2.8pt N}}}}
\def\sZZ{{\rm Z\kern-2.8ptem{}Z}}
\def\z{{\mathchoice
  {\sZZ}
  {\sZZ}
  {\rm Z\kern-0.30em{}Z}
  {\rm Z\kern-0.25em{}Z} }}
\def\sQQ{{\kern 0.27em \vrule height1.45ex width0.03em depth0em
          \kern-0.30em \rm Q}}
\def\qu{{\mathchoice
    {\sQQ}
    {\sQQ}
  {\kern 0.225em \vrule height1.05ex width0.025em depth0em \kern-0.25em \rm Q}
  {\kern 0.180em \vrule height0.78ex width0.020em depth0em \kern-0.20em \rm Q}
        }}
\def\sCC{{\kern 0.27em \vrule height1.45ex width0.03em depth0em
          \kern-0.30em \rm C}}
\def\complex{{\mathchoice
    {\sCC}
    {\sCC}
  {\kern 0.225em \vrule height1.05ex width0.025em depth0em \kern-0.25em \rm C}
  {\kern 0.180em \vrule height0.78ex width0.020em depth0em \kern-0.20em \rm C}
        }}
\newcommand{\R}{\mathbb{R}}
\newcommand{\ba}{\begin{array}}
\newcommand{\ea}{\end{array}}
\newcommand{\be}{\begin{equation}}
\newcommand{\ee}{\end{equation}}
\newcommand{\bea}{\begin{eqnarray}}
\newcommand{\eea}{\end{eqnarray}}
\newcommand{\beaa}{\begin{eqnarray*}}
\newcommand{\eeaa}{\end{eqnarray*}}
\def\z{\zeta}
\font\tenmath=msbm10 \font\sevenmath=msbm7 \font\fivemath=msbm5
\def \={{\buildrel {\rm (law)} \over =}}
\def \R{{\math R}}
\def\L{\Lambda}
\def\cN{{\cal N}}
\def\qed{ \hfill \vrule width.25cm height.25cm depth0cm\smallskip}
\newcommand{\basa}{\begin{assumption}}
\newcommand{\easa}{\end{assumption}}
\newcommand{\bas}{\begin{assum}}
\newcommand{\eas}{\end{assum}}
\def\L{\left}
\def\R{\right}
\def\inte{\mathbb{N}}
\newcommand{\ignore}[1]{}
\begin{document}

\renewcommand{\thefootnote}{\fnsymbol{footnote}}

\renewcommand{\thefootnote}{\fnsymbol{footnote}}

\author{Anthony R\'eveillac\footnote{Institut f\"ur Mathematik, Humboldt-Universit\"at zu Berlin, Unter den Linden 6, 10099 Berlin, Germany, areveill@mathematik.hu-berlin.de} \quad Michael Stauch\footnote{Institut f\"ur Mathematik, Humboldt-Universit\"at zu Berlin, Unter den Linden 6, 10099 Berlin, Germany, stauch@mathematik.hu-berlin.de} \quad Ciprian A. Tudor\footnote{Universit\'e de Lille 1, Laboratoire Paul Painlev\'e, Ciprian.Tudor@math.univ-lille1.fr. Associate member: Samm, Universit\'e de Panth\'eon-Sorbonne Paris 1, 90, rue de Tolbiac, Paris 13, France.}}

\title{Hermite variations of the fractional Brownian sheet}

\maketitle

\vspace{-0.5cm}
\begin{center}
\small{\emph{Dedicated to Paul Malliavin}}
\end{center}

\begin{abstract}
We prove central and non-central limit theorems for the Hermite variations of the anisotropic fractional Brownian sheet $W^{\alpha, \beta}$
with Hurst parameter $(\alpha, \beta) \in (0,1)^2$. When $0<\alpha \leq 1-\frac{1}{2q}$ or $0<\beta \leq 1-\frac{1}{2q}$ a central limit theorem holds for the renormalized Hermite variations of order $q\geq 2$, while for $1-\frac{1}{2q}<\alpha, \beta < 1$ we prove that these variations satisfy a non-central limit theorem. In fact, they converge to a random variable which is the value of a two-parameter Hermite process at time $(1,1)$.
\end{abstract}

\vskip0.3cm

{\bf 2010 AMS Classification Numbers:}  60F05, 60H05, 91G70.

 \vskip0.3cm

{\bf Key words:}  limit theorems, Hermite variations, multiple stochastic integrals, Malliavin calculus,  weak convergence.

\section{Introduction}

In recent years a lot of attention has been given to the study of
the (weighted) power variations for stochastic processes. Let us recall the case  of the fractional Brownian motion
(fBm). Consider  $B^H:=(B^H_t)_{t\in [0,1]}$  a fBm with Hurst
parameter $H$ in $(0,1)$. Recall that $B^{H} $ is a centered
Gaussian process with covariance $R_{H}(t,s)=\mathbb{E}(B^{H}_{t}B^{H}_{s})
=\frac{1}{2}(t^{2H}+s^{2H}-\vert t-s\vert ^{2H})$ for every $s,t\in [0,1]$. It can also be
defined as the only self-similar Gaussian process with stationary
increments.
 The  weighted Hermite variations of order $q\geq 1$ of $B^{H} $  are
defined as
$$V_N:=\sum_{i=1}^{N} f(B^H_{(i-1)/N}) H_q(N^{H}
(B^H_{i/N}-B^H_{(i-1)/N})),$$ where $H_q$ denotes the Hermite
polynomial of order $q$ (see Section \ref{section:Malliavin}) and $f$ is a real-valued deterministic function
regular enough. Take for example $q=2$. Since the second-order Hermite
polynomial is $H_{2}(x)=\frac{x^{2}-1}{2}$, the latter quantity is equal to
$\frac12 \sum_{i=0}^{N-1} f(B^H_{(i-1)/n}) (N^{2H}
|B^H_{i/N}-B^H_{(i-1)/N}|^2-1)$. The asymptotic behavior of these
variations plays an important role in estimating the parameter of the
fractional Brownian motion or of other self-similar processes (see e.g.
\cite{coeur} or  \cite{TV}). Weighted Hermite
variations are also crucial in the study of numerical
schemes for stochastic differential equations driven by a fBm (see
\cite{NN}).
A full understanding of $V_n$ is given in
\cite{BreuerMajor,DobrushinMajor,GiraitisSurgailis,Taqqu}, when
$f\equiv 1$ and in \cite{NNT,NourdinReveillac} for quite general
functions $f$. Let us recall the main results for the case $f\equiv
1$:
\begin{itemize}
\item If $0<H<1-1/(2q)$ then, $\quad N^{-1/2} V_N \overset{law}{\underset{n\to\infty}{\longrightarrow}} \mathcal{N}(0,C_H)$,
\item If $H=1-1/(2q)$ then, $\quad (\log(N) N)^{-1/2} V_N \overset{law}{\underset{n\to\infty}{\longrightarrow}} \mathcal{N}(0,C_{H})$,
\item If $1-1/(2q)<H<1$ then, $\quad N^{1-2H} V_N \overset{law}{\underset{n\to\infty}{\longrightarrow}} \textrm{ Hermite r.v.}$
\end{itemize}
Here $C_{H}$ is an explicit positive constant. A Hermite
random variable is the value at time $1$ of a Hermite process, which is a non-Gaussian self-similar
process with stationary increments living in the $q$th Wiener chaos (see e.g. \cite{NNT}).\\
In this paper we use Malliavin calculus and multiple stochastic
integrals to study the asymptotic behavior of the non-weighted
(\textit{i.e.} $f\equiv 1$) Hermite variations, where the fBm
is replaced by a fractional Brownian sheet (fBs), which is a centered Gaussian process
$(W^{\alpha,\beta}_{(s,t)})_{(s,t) \in [0,1]^2}$ whose covariance function is the product of the one of a fBm of parameter $\alpha$
in one direction and of the covariance of a  fBm of parameter $\beta$ in the other component.
We define the Hermite variations based on the rectangular increments
of $W^{\alpha , \beta }$ by, for every $N,M\geq 1$,
  \begin{equation*}
V_{N,M} := \sum_{i=0} ^{N-1}\sum_{j=0}^{M-1}H_{q} \left(
N^{\alpha}M^{\beta} \left(W^{\alpha, \beta } _{\frac{i+1}{N},
\frac{j+1}{M} } -W^{\alpha, \beta } _{\frac{i}{N}, \frac{j+1}{M}
}-W^{\alpha, \beta } _{\frac{i+1}{N}, \frac{j}{M} }+W^{\alpha, \beta
} _{\frac{i}{N}, \frac{j}{M} }\right)\right).
\end{equation*}
In some  sense the fBs is the tensorization of two orthogonal fBm. In the
view of results from the one-dimensional case mentioned above it
would be natural to expect the limit of the correctly
renormalized non-weighted Hermite variations to be the "tensorization" of the limits appearing in the one-parameter
  case. This actually is true in several cases: when $\alpha ,
  \beta \leq 1-\frac{1}{2q}$, the limit in distribution of the renormalized sequence
  $V_{N,}$ is, as expected, a Gaussian random variable and when $\alpha ,
  \beta > 1-\frac{1}{2q}$ the limit (in $L^{2}$ actually) of
  $V_{N,}$  is the value at time $(1,1)$ of a two-parameter Hermite
  process (which will be introduced later in the paper). The most
  interesting and quite unexpected case is when one Hurst parameter is
  less and the other one is strictly bigger than the critical value $1-\frac{1}{2q}$. It turns out that in this situation the limit in
  distribution of the renormalized Hermite variations is still
  Gaussian.\\
 We prove our   central limit theorems using Malliavin calculus and
 the so-called Stein's method on Wiener chaos introduced by Nourdin
 and Peccati in \cite{NoPe1}. Using these results, it
 is actually possible to measure the distance between the law of an
 arbitrary random variable $F$ (differentiable in the sense of the
 Malliavin calculus) and the standard normal law. This distance can
 be bounded by a quantity which involves the Malliavin derivative of
 $F$. Using these tools and analyzing the Malliavin derivatives of
 $V_{N,M}$ (which is an element of the $q$ th Wiener chaos
 generated by the fBs $W^{\alpha, \beta}$) we are able to
 derive a Berry-Ess\'een bound in our central limit theorem.\\

We proceed as follows. In Section \ref{section:pre} we define the
Hermite variations of a fBs and give the basic tools of Malliavin
calculus for the fractional Brownian sheet needed throughout the paper.  The central case is
presented in Section \ref{section:central}, whereas Section
\ref{section:non-central} is devoted to the non-central case. The Appendix contains some auxiliary technical lemmas.

\section{Preliminaries}
\label{section:pre}

\subsection{The fractional Brownian sheet}

Several extensions of the fractional Brownian motion have been proposed in the literature as for example the \textit{fractional Brownian field} (\cite{Lindstrom,BonamiEstrade}), the \textit{L\'evy's fractional Brownian field} (\cite{CiesielskiKamont}) and the \textit{anisotropic fractional Brownian sheet} (\cite{Kamont,AyacheLegerPontier2002}), which we consider in this paper. The definitions and properties of this section can be found in \cite{AlosMazetNualart2001,TudorViens2003}. We begin with the definition of the anisotropic fractional Brownian sheet.

\begin{definition}[Fractional Brownian sheet]
A fractional Brownian sheet $(W_{s,t}^{\alpha,\beta})_{(s,t)\in[0,1]^2}$ with Hurst indices $(\alpha,\beta)\in (0,1)^2$ is a centered two-parameter Gaussian process equal to zero on the set
$$\{(s,t) \in [0,1]^2, \; s=0 \textrm{ or } t=0 \}. $$
For $s_{1},t_{1},s_{2},t_{2}\in [0,1]$ the covariance function is given by
\begin{eqnarray*}
R^{\alpha,\beta}((s_1,t_1),(s_2,t_2))&:=&\mathbb{E}\L[ W_{s_1,t_1}^{\alpha,\beta} W_{s_2,t_2}^{\alpha,\beta} \R]\\
&=&K^\alpha(s_1,s_2) K^\beta(t_1,t_2)\\
&=&\frac12 \big( s_1^{2\alpha}+s_2^{2\alpha}-\vert s_1-s_2 \vert^{2 \alpha} \big) \frac12 \big( t_1^{2 \beta}+t_2^{2 \beta}-\vert t_1-t_2 \vert^{2 \beta} \big).
\end{eqnarray*}
\end{definition}
\noindent
We assume that $(W_{s,t}^{\alpha,\beta})_{(s,t)\in[0,1]^2}$ is defined on a complete probability space $\L(\Omega,\mathcal{F},P\R)$ where $\mathcal{F}$ is generated by $W^{\alpha,\beta}$. Let us denote by $\mathcal{H}^{\alpha,\beta}$ the canonical Hilbert space generated by the Gaussian process $W^{\alpha, \beta}$  defined as the closure of the linear span generated by the indicator functions on $[0,1]^2$ with respect to the scalar product
$$ \langle \textbf{1}_{[0,s_1]\times[0,t_1]}, \textbf{1}_{[0,s_2]\times[0,t_2]} \rangle_{\mathcal{H}^{\alpha,\beta}} = R^{\alpha,\beta}((s_1,t_1),(s_2,t_2)).$$
The mapping $\textbf{1}_{[0,s]\times[0,t]}\mapsto W_{s,t}^{\alpha,\beta}$ provides an isometry between $\mathcal{H}^{\alpha,\beta}$ and the first chaos of $W^{\alpha , \beta }$ denoted by $H_1^{\alpha,\beta}$. For an element $\varphi$ of $\mathcal{H}^{\alpha,\beta}$ we denote by $W^{\alpha,\beta }(\varphi)$ the image of $\varphi$ in the space $H_1^{\alpha,\beta}$.\\
For any $\gamma$ in $(0,1)$ we denote by $\mathcal{H}^\gamma$ the Hilbert space defined as the closure of the linear span generated by indicator functions on $[0,1]$ with respect to the scalar product
$$ \langle \textbf{1}_{[0,s_1]}, \textbf{1}_{[0,s_2]} \rangle_{\mathcal{H}^{\gamma}} = \frac12 \big( s_1^{2\gamma}+s_2^{2\gamma}-\vert s_1-s_2 \vert^{2 \gamma} \big).$$
The space ${\cal{H}}^{\gamma}$ is in fact the canonical Hilbert space generated by the (one-dimensional) fractional Brownian motion with Hurst parameter $\gamma \in (0,1)$.  With these notations we will often use the practical relation
$$ \langle \textbf{1}_{[0,s_1]\times[0,t_1]}, \textbf{1}_{[0,s_2]\times[0,t_2]} \rangle_{\mathcal{H}^{\alpha,\beta}}=\langle \textbf{1}_{[0,s_1]}, \textbf{1}_{[0,s_2]} \rangle_{\mathcal{H}^{\alpha}} \langle \textbf{1}_{[0,t_1]}, \textbf{1}_{[0,t_2]} \rangle_{\mathcal{H}^{\beta}}, \quad \forall (s_1,s_2,t_1,t_2) \in [0,1]^4.$$
More generally, for any two functions $f,g\in {\mathcal{H}^{\alpha,\beta}}$ such that $\int_{[0,1]^{4} }\left| f(u,v)g(a,b)\right| \vert u-a\vert ^{2\alpha -2} \vert v-b\vert ^{2\beta -2} \mathrm{d}a \mathrm{d}b\mathrm{d}u\mathrm{d}v<\infty$ we have
\begin{equation}
\label{scaH}
\langle f,g\rangle_{{\mathcal{H}^{\alpha,\beta}}}=a(\alpha) a(\beta) \int_{[0,1]^{4} } f(u,v)g(a,b) \vert u-a\vert ^{2\alpha -2} \vert v-b\vert ^{2\beta -2} \mathrm{d}a \mathrm{d}b\mathrm{d}u\mathrm{d}v
\end{equation}
with $a(\alpha)=\alpha (2\alpha -1)$. Note finally that we can also give a representation of $(W_{s,t}^{\alpha,\beta})_{(s,t)\in[0,1]^2}$ as a stochastic integral of kernels $K^\alpha$ and $K^\beta$ with respect to a standard Brownian sheet $(W_{(s,t)})_{(s,t)\in[0,1]^2}$:
$$ W_{(s,t)}^{\alpha,\beta}=\int_0^s \int_0^t K^\alpha(s,u) K^\beta(t,v) \; dW_{(u,v)},\quad (s,t)\in[0,1]^2,$$
where $K^{\alpha}$ is the usual kernel of the fractional Brownian motion $B^{\alpha}$ which appears in its expression as a Wiener integral $B^{\alpha}_{t}= \int_{0}^{t} K^{\alpha }(t,s)dW_{s}$ (see e.g. \cite{Nualart3} for an explicit definition of this kernels; we will not use it in this paper). Using this representation, Tudor and Viens in \cite{TudorViens2003,TudorViens2006} have developed a Malliavin calculus with respect to $W^{\alpha,\beta}$. \\
Let us recall the notion of self-similarity and stationary increments for a two-parameter process (see \cite{AyacheLegerPontier2002}).

\begin{definition}\label{stationary}
A two-parameter stochastic process $(X_{s,t})_{(s,t) \in T}$, $T\subset \mathbb{R}^{2}$ has stationary increments if for every $n\in \mathbb{N}$ and for every $(s_{1},t_{1}),(s_{2},t_{2}),\ldots , (s_{n}, t_{n}) \in T$ the law of the vector
\begin{equation*}
\left( X_{s+s_{1}, t+t_{1}},X_{s+s_{2}, t+t_{2}}, \ldots , X_{s+s_{n}, t+t_{n}}\right)
\end{equation*}
does not depends on $(s,t)\in T$.
\end{definition}

\begin{definition}
\label{self}
A two-parameter stochastic process $(X_{s,t})_{(s,t) \in T}$, $T\subset \mathbb{R}^{2}$  is self-similar with order $(\alpha, \beta)$ if for any $h,k>0$ the process $(\widehat{X}_{s,t} )_{s,t\in T}$
\begin{equation*}
\widehat{X}_{s,t}:= h^{\alpha } k^{\beta } X _{ \frac{s}{h}, \frac{t}{k} }
\end{equation*}
has the same law as the process $X$.
\end{definition}
Note that the fractional Brownian sheet $W^{\alpha, \beta}$ is self-similar and has stationary increments in the sense of Definitions \ref{self} and \ref{stationary} (see  \cite{AyacheLegerPontier2002}).\\
Now we present some elements of Malliavin calculus with respect to fractional Brownian sheets and especially the Malliavin integration by parts formula (\ref{eq:generalizedMallavinIBP}).

\subsection{Malliavin calculus for the fractional Brownian sheet}
\label{section:Malliavin}
We recall some definitions and properties of the Malliavin calculus for the fractional Brownian sheet. For general Gaussian processes these are contained in the framework described in \cite{Nualart3}.\\\\
\noindent
By $\mathcal{C}_b^\infty(\real^n)$ we denote the space of infinitely differentiable functions from $\real^n$ to $\real$ with bounded partial derivatives. For a cylindrical functional $F$ of the form
\begin{equation}
\label{eq:cylindric}
F=f\L(W^{\alpha,\beta}(\varphi_1),\ldots,W^{\alpha,\beta}(\varphi_n)\R), \quad n\geq 1, \; \varphi_1,\ldots,\varphi_n \in \mathcal{H}^{\alpha,\beta}, \; f \in \mathcal{C}_b^\infty(\real^n),
\end{equation}
we define the Malliavin derivative $DF$ of $F$ as,
$$ DF:=\sum_{i=1}^n \partial_i f\L(W^{\alpha,\beta}(\varphi_1),\ldots,W^{\alpha,\beta}(\varphi_n)\R) \varphi_i.$$
Furthermore $D:L^2(\Omega,\mathcal{F},P) \to L^2(\Omega,\mathcal{F},P;\mathcal{H}^{\alpha,\beta})$ is a closable operator and it can be extended to the closure of the Sobolev space $\mathbb{D}^{1,2}$ defined by the functionals $F$ whose norm $\|F\|_{1,2}$ is finite with
$$\|F\|_{1,2}^{2}:=\mathbb{E}\L[ F^2 \R] + \mathbb{E}\L[\| DF \|_{\mathcal{H}^{\alpha,\beta}}^2\R].$$
The adjoint operator $I_1$ of $D$ is called the divergence operator and it is  defined by the following duality relationship
$$ \mathbb{E}\L[ F I_1(u) \R] = \mathbb{E}\L[ \langle DF, u \rangle_{\mathcal{H}^{\alpha,\beta}} \R],$$
for $F$ in $\mathbb{D}^{1,2}$ and for $u$ in $\mathcal{H}^{\alpha,\beta}$ such that there exists a constant $c_u>0$, satisfying
$$ \L\vert \mathbb{E}\L[ \langle DG, u \rangle_{\mathcal{H}^{\alpha,\beta}} \R] \R\vert \leq c_u \|G\|_{L^2(\Omega,\mathcal{F},P)},\quad \textrm{ for every functional } G \textrm{ of the form } (\ref{eq:cylindric}).$$
Let $n\geq 1$. The $n$th Wiener chaos $\mathfrak{H}_n$ of $W^{\alpha,\beta}$ is the closed linear subspace of $L^2(\Omega,\mathcal{F},P)$ generated by the random variables $\L\{H_n\L(W^{\alpha,\beta}(\varphi)\R), \; \varphi \in \mathcal{H}^{\alpha,\beta}, \; \|\varphi\|_{\mathcal{H}^{\alpha,\beta}}=1\R\}$ where $H_n$ denotes the $n$th Hermite polynomial $$H_{n}(x)=\frac{(-1)^{n}}{n!}e^{\frac{x^{2}}{2}}\frac{d^{q}}{dx^{q}}\left(e^{-\frac{x^{2}}{2}}\right) .$$ A linear isometry between the symmetric tensor product $\left( {\mathcal{H}^{\alpha,\beta}}\right)^{\odot n}$ and $\mathfrak{H}_n$ is defined as
\begin{equation}
\label{eq:I_nH_n}
I_n\L(\varphi^{\otimes n}\R) := n! H_n\L(W^{\alpha,\beta}(\varphi)\R).
\end{equation}
We conclude this section by the following integration by parts formula:
\begin{equation}
\label{eq:generalizedMallavinIBP}
\mathbb{E}\L[ F I_n(h) \R] = \mathbb{E}\L[ \langle D^n F, h \rangle_{\left( {\mathcal{H}^{\alpha,\beta}}\right) ^{\otimes n}} \R], \quad h\in \left( {\mathcal{H}^{\alpha,\beta}}\right) ^{\odot n}, \; F \in \mathbb{D}^{n,2},
\end{equation}
where $\mathbb{D}^{n,2}$ is the space of functionals $F$ such that $\|F\|_{n,2}$ is finite with
$$ \|F\| ^{2}_{n,2}:=\mathbb{E}\L[ F^2 \R] + \sum_{i=1}^n \mathbb{E}\L[\| D^i F \|_{\mathcal{H}^{\alpha,\beta}}^2\R]. $$

\subsection{Hermite variations of the fBs}

Let $(W^{\alpha, \beta}_{s,t})_{s,t\geq 0}$ be a fractional Brownian sheet with Hurst parameter $(\alpha, \beta) \in (0,1)^2$.
We will define the Hermite variations of order $q\geq 1$ of the fractional Brownian sheet by
\begin{equation}
V_{N,M} := \sum_{i=0} ^{N-1}\sum_{j=0}^{M-1}H_{q} \left( N^{\alpha}M^{\beta} \left(W^{\alpha, \beta } _{\frac{i+1}{N}, \frac{j+1}{M} } -W^{\alpha, \beta } _{\frac{i}{N}, \frac{j+1}{M} }-W^{\alpha, \beta } _{\frac{i+1}{N}, \frac{j}{M} }+W^{\alpha, \beta } _{\frac{i}{N}, \frac{j}{M} }\right)\right),
\label{eq:V_nm}
\end{equation}
where $H_{q}$ is the Hermite polynomial of order $q$.  Note that
\begin{equation*}
\mathbb{E}\left(  W^{\alpha, \beta } _{\frac{i+1}{N}, \frac{j+1}{M} } -W^{\alpha, \beta } _{\frac{i}{N}, \frac{j+1}{M} }-W^{\alpha, \beta } _{\frac{i+1}{N}, \frac{j}{M} }+W^{\alpha, \beta } _{\frac{i}{N}, \frac{j}{M} } \right) ^{2}=N^{-2\alpha} M^{-2\beta},
\end{equation*}
which explains the appearance of the factor $N^{\alpha} M^{\beta}$ in (\ref{eq:V_nm}): with this factor the random variable $N^{\alpha}M^{\beta} \left(W^{\alpha, \beta } _{\frac{i+1}{N}, \frac{j+1}{M} } -W^{\alpha, \beta } _{\frac{i}{N}, \frac{j+1}{M} }-W^{\alpha, \beta } _{\frac{i+1}{N}, \frac{j}{M} }+W^{\alpha, \beta } _{\frac{i}{N}, \frac{j}{M} }\right)$ has $L^{2}$-norm equal to 1.\\
We will use  the notation
\begin{equation*}
\Delta i= \left[\frac{i}{N}, \frac{i+1}{N}\right] \mbox{ and } \Delta i,j= \left[\frac{i}{N}, \frac{i+1}{N}\right]\times \left[\frac{j}{M}, \frac{j+1}{M}\right] =\Delta i\times \Delta j,
\end{equation*}
for $i\in\{0,\dots,N-1\}, \; j\in\{0,\dots, M-1\}$. In principle $\Delta i = \Delta i ^{(N)}$ depends on $N$ but we will omit the superscript $N$ to simplify the notation.  With this notation we can write
\begin{equation*}
W^{\alpha, \beta } _{\frac{i+1}{N}, \frac{j+1}{M} } -W^{\alpha, \beta } _{\frac{i}{N}, \frac{j+1}{M} }-W^{\alpha, \beta } _{\frac{i+1}{N}, \frac{j}{M} }+W^{\alpha, \beta } _{\frac{i}{N}, \frac{j}{M} }=I_{1} \left( \mathbf{1}_{[\frac{i}{N}, \frac{i+1}{N}]\times [\frac{j}{M}, \frac{j+1}{M}]}\right)=I_1 (\mathbf{1}_{\Delta i,j}) = I_{1}(\mathbf{1}_{\Delta i \times \Delta j}).
\end{equation*}
Here, and throughout the paper,  $I_{n}$ indicates the multiple integral of order $n>1$ with respect to the fractional Brownian sheet $W^{\alpha , \beta}$. Since for any deterministic function $h\in {\cal{H}}^{\alpha , \beta}$ with  norm one we have
\begin{equation*}
H_{q}(I_{1} (h)) =\frac{1}{q!} I_{q} (h^{\otimes q} ),
\end{equation*}
we derive at
\begin{equation*}
V_{N,M}=\frac{1}{q!}  \sum_{i=1} ^{N}\sum_{j=1}^{M}N^{\alpha q} M^{\beta q}I_{q} \left(   \mathbf{1}_{[\frac{i}{N}, \frac{i+1}{N}]\times [\frac{j}{M}, \frac{j+1}{M}]} ^{\otimes q} \right).
\end{equation*}
We want to study the limit of the (suitably normalized) sequence $V_{N,M}$ as $N,M\to \infty$.
 Since this normalization is depending on the choice of $\alpha$ and $\beta$, we will normalize it with a
 function $\varphi(\alpha,\beta,N,M)$. \\
 Let us define
 \begin{equation}\label{tildeV}
\tilde V_{N,M}:=\frac{1}{q!} \varphi(\alpha,\beta,N,M)\sum_{i=1} ^{N}\sum_{j=1}^{M}I_{q} \left(\mathbf{1}_{[\frac{i}{N}, \frac{i+1}{N}]\times [\frac{j}{M}, \frac{j+1}{M}]} ^{\otimes q} \right).
\end{equation}
By renormalization of the sequence $V_{N,M}$ we understand a function $\varphi (\alpha , \beta , N, M)$ to fulfill the property
 $\mathbb{E}\tilde{V}_{N,M} ^{2} \overset{N,M\to \infty}{\to} 1$.\\
It turns out that the limit of the sequence $\tilde{V}_{N,M}$ is
either Gaussian, or a Hermite random variable,
which is the value at time $(1,1)$ of a two-parameter Hermite process.\\
In the case when $\tilde{V}_{N,M} $ converges to a Gaussian random
variable, our proof will be based on the following result (see
\cite{NoPe1}, see also \cite{NoPe3}).

\begin{theorem}
\label{th:NourdinPeccati}
Let $F$  be a random variable in the $q$th Wiener chaos. Then
\begin{eqnarray}
\label{dkolHq}
d(F,N) & \leq & c\sqrt{\mathbb{E}\left(\left(1 - q^{-1}\left\|DF\right\|_{\mathcal{H}^{\alpha,\beta}}^{2}\right)^{2}\right)}
\end{eqnarray}
 The above inequality still holds true for several distances (Kolmogorov, Wasserstein, total variation or Fortet-Mourier). The constant $c$ is equal to 1 in the case of the Kolmogorov and of the Wasserstein distance, $c$=2 for the  total variation distance and $c=4$ in the case of the Fortet-Mourier distance.
\end{theorem}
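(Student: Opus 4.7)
The plan is to combine Stein's method with the Malliavin integration by parts formula \eqref{eq:generalizedMallavinIBP}, specialized to random variables living in a single Wiener chaos. First I would recall the Stein characterization of the standard normal: for each test function $h$ belonging to one of the relevant classes (Lipschitz with $\|h\|_{Lip}\leq 1$ for Wasserstein; indicator functions $\mathbf{1}_{(-\infty,z]}$ for Kolmogorov; $\mathbf{1}_B$ for total variation; bounded Lipschitz test with the appropriate norm for Fortet-Mourier), the Stein equation
\[
f_h'(x) - x f_h(x) = h(x) - \mathbb{E}[h(N)]
\]
admits a solution $f_h$ whose derivative is bounded uniformly, with $\|f_h'\|_\infty$ controlled by the constant $c$ listed in the statement (namely $1,1,2,4$ respectively). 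Hence
\[
d(F,N) \;\leq\; \sup_h \bigl|\mathbb{E}[f_h'(F) - F f_h(F)]\bigr|,
\]
the supremum being taken over the corresponding class of Stein solutions.

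Next I would use the key algebraic fact that on the $q$th Wiener chaos the Ornstein--Uhlenbeck operator $L$ acts as $L = -q\,\mathrm{Id}$, so that $-L^{-1}F = F/q$ and in particular $-DL^{-1}F = q^{-1}DF$. Applying the Malliavin integration by parts \eqref{eq:generalizedMallavinIBP} (in its duality form $\mathbb{E}[F\,\varphi(F)] = \mathbb{E}[\varphi'(F)\langle DF, -DL^{-1}F\rangle_{\mathcal{H}^{\alpha,\beta}}]$) with $\varphi = f_h$ yields
\[
\mathbb{E}[F f_h(F)] \;=\; \mathbb{E}\bigl[f_h'(F)\,q^{-1}\|DF\|_{\mathcal{H}^{\alpha,\beta}}^{2}\bigr].
\]
Substituting back into the Stein bound gives the identity
\[
\mathbb{E}[h(F)] - \mathbb{E}[h(N)] \;=\; \mathbb{E}\Bigl[f_h'(F)\bigl(1 - q^{-1}\|DF\|_{\mathcal{H}^{\alpha,\beta}}^{2}\bigr)\Bigr].
\]

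The conclusion then follows by taking absolute values, pulling out $\|f_h'\|_\infty \leq c$, and applying the Cauchy--Schwarz inequality:
\[
\bigl|\mathbb{E}[h(F)] - \mathbb{E}[h(N)]\bigr| \;\leq\; c\,\mathbb{E}\bigl|1 - q^{-1}\|DF\|_{\mathcal{H}^{\alpha,\beta}}^{2}\bigr| \;\leq\; c\sqrt{\mathbb{E}\bigl[(1 - q^{-1}\|DF\|_{\mathcal{H}^{\alpha,\beta}}^{2})^{2}\bigr]},
\]
and taking the supremum over the test class $h$.

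The step I expect to require the most care is the justification of the Stein-equation bounds for non-smooth test functions (notably the indicator test functions used for the Kolmogorov and total variation distances), since the Malliavin integration by parts is initially stated for smooth cylindrical $\varphi$ and must be extended by a density/approximation argument to the Lipschitz (or merely bounded) Stein solution $f_h$; this relies on the fact that $F\in\mathbb{D}^{1,2}$ (automatic for an element of the $q$th chaos) and on a standard mollification argument. Everything else is a direct consequence of the chaos decomposition and the identity $-L^{-1}|_{\mathfrak{H}_q}=q^{-1}\mathrm{Id}$; the concrete values $c=1,1,2,4$ are read off from the well-known estimates on the Stein solutions, which I would import rather than re-derive.
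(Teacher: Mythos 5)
Your proposal is correct and is precisely the standard Nourdin--Peccati argument: the paper itself gives no proof of this theorem, importing it directly from \cite{NoPe1} (see also \cite{NoPe3}), and your reconstruction --- Stein equation, the identity $\mathbb{E}[Ff_h(F)]=\mathbb{E}[f_h'(F)\,q^{-1}\|DF\|_{\mathcal{H}^{\alpha,\beta}}^{2}]$ via $-L^{-1}=q^{-1}\mathrm{Id}$ on $\mathfrak{H}_q$, then Cauchy--Schwarz with the classical bounds $\|f_h'\|_\infty\leq c$ for the four test classes --- is exactly the proof found in that reference. You also correctly flag the only delicate point (extending the chain rule and integration by parts to the non-smooth Stein solutions for the Kolmogorov and total variation cases, which uses the absolute continuity of the law of a nonzero chaos element), so nothing is missing.
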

Until the end of this paper $d$ will denote one of the distances
mentioned in the previous theorem. We will also assume that $q\geq 2$ because for $q=1$ we have $H_{1}=x$ and then $V_{N,M}$ is Gaussian; this case is trivial. Our argumentation has the
following structure. We first compute the Malliavin derivative (with respect to the fractional Brownian sheet $W^{\alpha, \beta }$)
$D\tilde V_{N,M}$ and we compute its norm in the space
${\cal{H}}^{\alpha, \beta }$. We will get
\begin{equation*}
D\tilde V_{N,M}=\frac{1}{(q-1)!} \varphi(\alpha,\beta,N,M)\sum_{i=1}
^{N}\sum_{j=1}^{M}I_{q-1} \left(\mathbf{1}_{[\frac{i}{N},
\frac{i+1}{N}]\times [\frac{j}{M}, \frac{j+1}{M}]} ^{\otimes q-1}
\right)\mathbf{1}_{[\frac{i}{N}, \frac{i+1}{N}]\times [\frac{j}{M},
\frac{j+1}{M}]},
\end{equation*}
and
\begin{eqnarray*}
\Vert D \tilde V_{N,M} \Vert ^{2} _{{\cal{H}}^{\alpha, \beta } }&=&\frac{1}{(q-1)!^2} (\varphi(\alpha,\beta,N,M))^2\times \sum_{i,i'=0}^{N-1} \sum_{j,j'=0}^{M-1}\langle \mathbf{1}_{\Delta i,j}(\cdot ), \mathbf{1}_{\Delta i',j'}(\cdot )\rangle  _{{\cal{H}}^{\alpha, \beta } }\\
&&\times I_{q-1}  \left( \mathbf{1}_{\Delta i,j} ^{\otimes q-1}\right)I_{q-1}
\left( \mathbf{1}_{\Delta i',j'} ^{\otimes q-1}\right).
\end{eqnarray*}
The product formula for multiple integrals (see \cite{Nualart3}, Chapter 1) reads
\begin{eqnarray*}
I_{q-1}  \left( \mathbf{1}_{\Delta i,j} ^{\otimes q-1}\right)I_{q-1}  \left( \mathbf{1}_{\Delta i',j'} ^{\otimes q-1}\right)&=&\sum_{p=0}^{q-1} p! (C_{q-1}^{p})^{2} \langle \mathbf{1}_{\Delta i,j}(\cdot ), \mathbf{1}_{\Delta i',j'}(\cdot )\rangle  _{{\cal{H}}^{\alpha, \beta } }^{p}\\
&&\times I_{2q-2-2p}\left( \mathbf{1}_{\Delta i,j} ^{\otimes
q-1-p}\tilde{\otimes } \mathbf{1}_{\Delta i',j'} ^{\otimes q-1-p}\right),
\end{eqnarray*}
where $C_{q-1}^{p}:=\begin{pmatrix}
                      q-1 \\
                      p \\
                    \end{pmatrix}
$ for $q\geq 2, p\leq q-1$ and $f\tilde{\otimes } g$ denotes the symmetrization of the function $f\otimes g$.   Hence, we have
\begin{eqnarray*}
\Vert D \tilde V_{N,M} \Vert ^{2} _{{\cal{H}}^{\alpha, \beta } }&=&\frac{1}{(q-1)!^2} (\varphi(\alpha,\beta,N,M))^2\times \sum_{i,i'=0}^{N-1} \sum_{j,j'=0}^{M-1}\sum_{p=0}^{q-1}\langle \mathbf{1}_{\Delta i,j}(\cdot ), \mathbf{1}_{\Delta i',j'}(\cdot )\rangle  _{{\cal{H}}^{\alpha, \beta } }^{p+1}p! \\
&&\times(C_{q-1}^{p})^{2}I_{2q-2-2p}\left( \mathbf{1}_{\Delta i,j} ^{\otimes
q-1-p}\tilde{\otimes } \mathbf{1}_{\Delta i',j'} ^{\otimes q-1-p}\right).
\end{eqnarray*}
Let us isolate the term $p=q-1$ in the above expression. In this
case $2q-2-2p=0$ and this term gives the expectation of $\Vert
D \tilde V_{N,M} \Vert ^{2} _{{\cal{H}}^{\alpha, \beta }
}$.
\begin{eqnarray}
&&\Vert D\tilde V_{N,M} \Vert ^{2} _{{\cal{H}}^{\alpha, \beta } }\\
&=&\frac{1}{(q-1)!^2} (\varphi(\alpha,\beta,N,M))^2 \nonumber\\
&&\times \sum_{i,i'=0}^{N-1}
\sum_{j,j'=0}^{M-1}\sum_{p=0}^{q-2}\langle \mathbf{1}_{\Delta i,j}(\cdot ),
\mathbf{1}_{\Delta i',j'}(\cdot )\rangle  _{{\cal{H}}^{\alpha, \beta }
}^{p+1}p!(C_{q-1}^{p})^{2}I_{2q-2-2p}\left( \mathbf{1}_{\Delta i,j} ^{\otimes
q-1-p}\tilde{\otimes }
\mathbf{1}_{\Delta i',j'} ^{\otimes q-1-p}\right)\nonumber\\
&&+\frac{1}{(q-1)!} (\varphi(\alpha,\beta,N,M))^2\sum_{i,i'=0}^{N-1}
\sum_{j,j'=0}^{M-1}\langle \mathbf{1}_{\Delta i,j}(\cdot ), \mathbf{1}_{\Delta
i',j'}(\cdot )\rangle  _{{\cal{H}}^{\alpha, \beta }
}^{q}=:T_1+T_2.\label{eq:T_1+T_2}
\end{eqnarray}
The term $T_{2}$ is a deterministic term which is equal to $\mathbb{E}\Vert D\tilde V_{N,M} \Vert ^{2} _{{\cal{H}}^{\alpha, \beta } }$.\\
With the correct choice of the normalization we will show that
$T_2$ is converging to $q$ as $N,M$ goes to infinity and $T_1$ converges to zero in $L^2$ sense. Using Theorem \ref{th:NourdinPeccati} we will prove the convergence  to a standard normal random variable of $\tilde{V}_{N,M}$ and we  give bounds for the speed of convergence. The distinction between the two cases (when the limit is normal and when the limit is non-Gaussian) will be made by the term $T_{1}$: it converges to zero if $\alpha \leq 1-\frac{1}{2q}$ or $\beta  \leq 1-\frac{1}{2q}$, while for $\alpha , \beta >1-\frac{1}{2q}$ this term converges to a constant.\\
Let us first discuss the normalization $\varphi(\alpha,\beta,N,M)$ and the convergence of $T_2$ in the following lemma. Given two sequences of real numbers $(a_n)_{n \geq 1}$ and $(b_n)_{n \geq 1}$, we write $a_n \unlhd b_n$ for $\sup_{n \geq 1} \frac{|a_n|}{|b_n|} < \infty$.
\begin{lemma}
\label{T_2}
Let $T_2$ be as in \eqref{eq:T_1+T_2}. Then $q^{-1}T_2\overset{N,M\to \infty}{\to} 1$ for the following choices of $\varphi$:
\begin{itemize}
\item[1)] $\varphi(\alpha,\beta,N,M)=\sqrt{\frac{q!}{s_\alpha s_\beta}} N^{\alpha q-1/2}M^{\alpha q-1/2}$, if $0<\alpha,\beta<1-\frac{1}{2q}$ and $q^{-1}T_2-1 \unlhd N^{-1}+N^{2q\alpha-2q+1}+M^{-1}+M^{2q\beta-2q+1}$,\\
\item[2)] $\varphi(\alpha,\beta,N,M)=\sqrt{\frac{q!}{s_\alpha \iota_\beta}} N^{\alpha q-1}M^{q-1}(\log M)^{-1/2}$, if $0<\alpha<1-\frac{1}{2q}, \; \beta=1-\frac{1}{2q}$ and $q^{-1}T_2-1 \unlhd N^{-1}+N^{2q\alpha-2q+1} + (\log M)^{-1}$,\\
\item[3)] $\varphi(\alpha,\beta,N,M)=\sqrt{\frac{q!}{\iota_\alpha \iota_\beta}} N^{q-1}(\log N)^{-1/2} M^{q-1}(\log M)^{-1/2}$, if $\alpha=\beta=1-\frac{1}{2q}$ and $q^{-1}T_2-1 \unlhd (\log N)^{-1}+(\log M)^{-1}$,\\
\item[4)] $\varphi(\alpha,\beta,N,M)=\sqrt{\frac{q!}{s_\alpha \kappa_\beta}} N^{\alpha q-1/2} M^{q-1}$, if $0<\alpha<1-\frac{1}{2q},\; \beta>1-\frac{1}{2q}$ and $q^{-1}T_2-1 \unlhd N^{-1}+N^{-2q\alpha+2q-1}+M^{-2q\beta+2q-1}$\\
\item[5)] $\varphi(\alpha,\beta,N,M)=\sqrt{\frac{q!}{\iota_\alpha \kappa_\beta}} N^{q-1} (\log N)^{-1/2} M^{q-1}$, if $0<\alpha=1-\frac{1}{2q}, \; \beta>1-\frac{1}{2q}$ and $q^{-1}T_2-1 \unlhd (\log(N)^{-1}+M^{-2q\beta+2q-1}$.
 \item[6)] $\varphi(\alpha,\beta,N,M)=\sqrt{\frac{q!}{\kappa_\alpha \kappa_\beta}} N^{q-1}  M^{q-1}$, if $\alpha> 1-\frac{1}{2q}, \; \beta>1-\frac{1}{2q}$ and $q^{-1}T_2-1 \unlhd N^{-2q\alpha+2q-1}+M^{-2q\beta+2q-1}$.
\end{itemize}
where $s_\cdot$, $\iota_\cdot$ and $\kappa_\cdot$ are defined in Lemma \ref{lemma:estimqteforT_2} in the Appendix.
\end{lemma}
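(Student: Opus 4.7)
The plan is to use the tensor-product structure of $\mathcal{H}^{\alpha,\beta}$ to reduce $T_2$ to a product of two one-dimensional sums. From
$\langle \mathbf{1}_{\Delta i,j}, \mathbf{1}_{\Delta i',j'} \rangle_{\mathcal{H}^{\alpha,\beta}}
= \langle \mathbf{1}_{\Delta i}, \mathbf{1}_{\Delta i'} \rangle_{\mathcal{H}^{\alpha}}
\langle \mathbf{1}_{\Delta j}, \mathbf{1}_{\Delta j'} \rangle_{\mathcal{H}^{\beta}}$, raising to the $q$-th power and summing yields
\begin{equation*}
T_2 \;=\; \frac{(\varphi(\alpha,\beta,N,M))^2}{(q-1)!}\, S_N^{\alpha}\, S_M^{\beta}, \qquad S_N^{\gamma} := \sum_{i,i'=0}^{N-1} \langle \mathbf{1}_{\Delta i}, \mathbf{1}_{\Delta i'} \rangle_{\mathcal{H}^{\gamma}}^q .
\end{equation*}
Proving $q^{-1}T_2\to 1$ with the stated rate thus reduces to pinning down the asymptotics of $S_N^\gamma$ with sharp remainders and choosing $\varphi$ so that the leading product cancels $q!$.

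Writing $\langle \mathbf{1}_{[i/N,(i+1)/N]}, \mathbf{1}_{[i'/N,(i'+1)/N]} \rangle_{\mathcal{H}^\gamma} = N^{-2\gamma} r_\gamma(i-i')$ with $r_\gamma(k) = \tfrac{1}{2}(|k+1|^{2\gamma}+|k-1|^{2\gamma}-2|k|^{2\gamma})$, a shift of the summation index gives $S_N^\gamma = N^{-2q\gamma}\sum_{|k|<N}(N-|k|)\,r_\gamma(k)^q$. Since $r_\gamma(k)\sim \gamma(2\gamma-1)|k|^{2\gamma-2}$ at infinity, three regimes appear according to the sign of $q(2\gamma-2)+1$: in the subcritical range $\gamma<1-\tfrac{1}{2q}$ the series $s_\gamma := \sum_{k\in\mathbb{Z}} r_\gamma(k)^q$ converges and $S_N^\gamma = s_\gamma N^{1-2q\gamma}$ with relative error of order $N^{-1}+N^{2q\gamma-2q+1}$ (from the weighting correction $\sum_{|k|<N}|k|\,r_\gamma^q$ and from the series tail $\sum_{|k|\ge N}r_\gamma^q$ respectively); at the critical boundary $\gamma=1-\tfrac{1}{2q}$ one obtains $S_N^\gamma = \iota_\gamma N^{2-2q}\log N$ with relative error $O((\log N)^{-1})$; in the supercritical range $\gamma>1-\tfrac{1}{2q}$ one has $S_N^\gamma = \kappa_\gamma N^{2-2q}$ with relative error $O(N^{-2q\gamma+2q-1})$, where $\kappa_\gamma$ arises as a Riemann-sum limit. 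These three one-dimensional estimates, with the explicit constants $s_\gamma,\iota_\gamma,\kappa_\gamma$, are exactly the content of Lemma \ref{lemma:estimqteforT_2} in the Appendix.

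The six items of the present lemma correspond to the six non-equivalent pairings of the three regimes for $(\alpha,\beta)$ (by the symmetry $\alpha\leftrightarrow\beta$). In each case, reading off the leading asymptotic of $S_N^\alpha S_M^\beta$ and defining $\varphi$ as $\sqrt{q!}$ divided by its square root reproduces exactly the six normalizations in the statement; the quoted bound on $q^{-1}T_2-1$ is then the sum of the one-dimensional relative errors from each of the two factors, which explains why each line contains an additive contribution from $\alpha$ and from $\beta$. The main technical effort lies in the appendix lemma itself, particularly at the critical boundary $\gamma=1-\tfrac{1}{2q}$, where the coefficient $\iota_\gamma$ of $\log N$ must be identified precisely via comparison with $\int_1^N k^{-1}\,dk$ and the remainder controlled uniformly in $N$; the subcritical and supercritical remainders are handled respectively by separating tail from weighting contributions and by Riemann-sum approximation. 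Granted those three one-dimensional estimates, the present lemma follows by direct multiplication.
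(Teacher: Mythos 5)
Your proposal is correct and takes essentially the same route as the paper: you factorize $T_2$ through the tensor-product identity $\langle \mathbf{1}_{\Delta i,j}, \mathbf{1}_{\Delta i',j'} \rangle_{\mathcal{H}^{\alpha,\beta}} = \langle \mathbf{1}_{\Delta i}, \mathbf{1}_{\Delta i'} \rangle_{\mathcal{H}^{\alpha}}\langle \mathbf{1}_{\Delta j}, \mathbf{1}_{\Delta j'} \rangle_{\mathcal{H}^{\beta}}$ into a product of two one-dimensional sums and then invoke Lemma \ref{lemma:estimqteforT_2}, with the error in each case obtained additively from the two one-dimensional relative errors, exactly as in the paper. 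Your extra sketch of the subcritical, critical and supercritical asymptotics is just an (accurate) outline of that appendix lemma, which the paper takes as given from the cited references.
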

\begin{proof}
Using the properties of the scalar product in Hilbert spaces  we have
\begin{eqnarray*}
T_2&=&\frac{1}{(q-1)!} (\varphi(\alpha,\beta,N,M))^2\times \sum_{i,i'=0}^{N-1} \sum_{j,j'=0}^{M-1}\langle \mathbf{1}_{\Delta i,j}(\cdot ), \mathbf{1}_{\Delta i',j'}(\cdot )\rangle  _{{\cal{H}}^{\alpha, \beta } }^{q}\\
&=&\frac{1}{(q-1)!} (\varphi(\alpha,\beta,N,M))^2\times \left(\sum_{i,i'=0}^{N-1} \langle \mathbf{1}_{\Delta i}(\cdot ), \mathbf{1}_{\Delta i'}(\cdot )\rangle_{{\cal{H}}^{\alpha} }^{q} \right) \times \left(\sum_{j,j'=0}^{M-1} \langle \mathbf{1}_{\Delta j}(\cdot ), \mathbf{1}_{\Delta j'}(\cdot )\rangle  _{{\cal{H}}^{\beta} }^{q}\right).
\end{eqnarray*}
The result follows then from Lemma \ref{lemma:estimqteforT_2} in the Appendix. \qed\\\\
\end{proof}

\begin{remark}
As  mentioned above, $T_{2} =\mathbb{E} \Vert D\tilde{V}_{N,M}\Vert _{{\cal{H}}^{\alpha, \beta}} $. On the other hand, we also have
\begin{equation*}
 q T_{2}=\mathbb{E}\tilde{V}_{N,M}^{2}.
 \end{equation*}
 Indeed, this is true because for every multiple integral $F=I_{q}(f)$, it holds that $\mathbb{E}F^{2}= q\mathbb{E} \Vert DF\Vert ^{2}_{{\cal{H}}^{\alpha, \beta}}$.
\end{remark}
\section{The Central Limit Case}\label{section:central}

We will prove that for every $\alpha , \beta \in (0,1)^{2} \setminus \left(1-\frac{1}{2q}, 1\right) ^{2}$ a Central Limit Theorem holds, where $\tilde{V}_{N,M}$ was defined in (\ref{tildeV}). Using the Stein's method we also give the Berry-Ess\'een bounds for this convergence.

\begin{theorem}(Central Limits)\\
Let $\tilde V_{N,M}$ be defined by (\ref{tildeV}). For every
$(\alpha,\beta)\in (0,1) ^{2} $, we denote by $c_{\alpha,\beta}$ a generic positive  constant which
depends on $\alpha,\beta,q$ and on the distance $d$ and which is
independent of $N$ and $M$. We have
\begin{enumerate}
\item[1)] If $0<\alpha,\beta<1-\frac{1}{2q}$, then $\tilde V_{N,M}$ converges in law to a standard normal r.v. $\cN$ with normalization $\varphi(\alpha,\beta,N,M)=\sqrt{\frac{q!}{s_\alpha s_\beta}} N^{\alpha q-1/2}M^{\alpha q-1/2}.$ In addition
$$ d(\tilde V_{N,M},\cN) \leq c_{\alpha,\beta} \sqrt{N^{-1}+N^{2\alpha-2}+N^{2\alpha q-2q+1} + M^{-1}+M^{2\beta-2}+M^{2\beta q-2q+1}}.$$
\item[2)] If $0<\alpha<1-\frac{1}{2q}$ and $\beta=1-\frac{1}{2q}$, then $\tilde V_{N,M}$ converges in law to a standard normal r.v. $\cN$ with normalization $\varphi(\alpha,\beta,N,M)=\sqrt{\frac{q!}{s_\alpha \iota_\beta}} N^{\alpha q-1}M^{q-1}(\log M)^{-1/2}.$ In addition
$$ d(\tilde V_{N,M},\cN) \leq c_{\alpha,\beta} \sqrt{N^{-1}+N^{2\alpha-2}+N^{2\alpha q-2q+1} + (\log{M})^{-1}}.$$
\item[3)] If both $\alpha=\beta=1-\frac{1}{2q}$, then $\tilde V_{N,M}$ converges in law to a standard normal r.v. $\cN$ with normalization $\varphi(\alpha,\beta,N,M)=\sqrt{\frac{q!}{\iota_\alpha \iota_\beta}} N^{q-1}(\log N)^{-1/2} M^{q-1}(\log M)^{-1/2}.$ In addition
$$ d(\tilde V_{N,M},\cN) \leq c_{\alpha,\beta} \sqrt{\log{N}^{-1} + \log{M}^{-1}}.$$
\item[4)] If $\alpha<1-\frac{1}{2q}$ and $\beta>1-\frac{1}{2q}$, then $\tilde V_{N,M}$ converges in law to a standard normal r.v. $\cN$ with normalization $\varphi(\alpha,\beta,N,M)=\sqrt{\frac{q!}{s_\alpha \kappa_\beta}} N^{\alpha q-1/2} M^{q-1}$. In addition
$$ d(\tilde V_{N,M},\cN) \leq c_{\alpha,\beta} \sqrt{N^{-1}+N^{2\alpha-2}+N^{2\beta q-2q+1}+M^{2\beta q-2q+1}}.$$
\item[5)] If $\alpha=1-\frac{1}{2q}$ and $\beta>1-\frac{1}{2q}$, then $\tilde V_{N,M}$ converges in law to a standard normal r.v. $\cN$ with normalization $\varphi(\alpha,\beta,N,M)=\sqrt{\frac{q!}{\iota_\alpha \kappa_\beta}} N^{q-1} (\log N)^{-1/2} M^{q-1}.$ In addition
$$ d(\tilde V_{N,M},\cN) \leq c_{\alpha,\beta} \sqrt{\log(N)^{-1}+M^{2\beta q-2q+1}}.$$
\end{enumerate}
\end{theorem}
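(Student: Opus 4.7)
The plan is to apply Theorem \ref{th:NourdinPeccati} to $F=\tilde V_{N,M}$, which lives in the $q$th Wiener chaos of $W^{\alpha,\beta}$, and to exploit the decomposition $\|D\tilde V_{N,M}\|^2_{\mathcal{H}^{\alpha,\beta}}=T_1+T_2$ already derived in \eqref{eq:T_1+T_2}. Since $T_2$ is deterministic while each multiple integral appearing in $T_1$ has order $2q-2-2p\geq 2$ (because $p\leq q-2$), the random variable $T_1$ is centered, and orthogonality of Wiener chaoses of different orders yields
\begin{equation*}
\mathbb{E}\bigl[(1-q^{-1}\|D\tilde V_{N,M}\|^2_{\mathcal{H}^{\alpha,\beta}})^2\bigr] = (1-q^{-1}T_2)^2 + q^{-2}\,\mathbb{E}[T_1^2].
\end{equation*}
Thus by Theorem \ref{th:NourdinPeccati} it suffices to control these two quantities. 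The first term is provided for free by Lemma \ref{T_2}: in each of the cases 1)--5), the prescribed choice of $\varphi(\alpha,\beta,N,M)$ yields exactly the decay rate required for $(1-q^{-1}T_2)^2$. Only the bound on $\mathbb{E}[T_1^2]$ remains.

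To estimate $\mathbb{E}[T_1^2]$ I would use the isometry $\mathbb{E}|I_n(f)|^2=n!\|\tilde f\|^2_{(\mathcal{H}^{\alpha,\beta})^{\otimes n}}$ together with the orthogonality of chaoses of different orders. This rewrites $\mathbb{E}[T_1^2]$ as a sum over $p=0,\ldots,q-2$ of expressions of the form
\begin{equation*}
c_{p,q}\,\varphi(\alpha,\beta,N,M)^4 \sum_{i,i',k,k'=0}^{N-1}\sum_{j,j',\ell,\ell'=0}^{M-1} \langle \mathbf{1}_{\Delta i,j},\mathbf{1}_{\Delta i',j'}\rangle_{\mathcal{H}^{\alpha,\beta}}^{p+1} \langle \mathbf{1}_{\Delta k,\ell},\mathbf{1}_{\Delta k',\ell'}\rangle_{\mathcal{H}^{\alpha,\beta}}^{p+1} \times (\text{mixed pairings}),
\end{equation*}
where the mixed pairings come from the symmetrized contractions $\mathbf{1}_{\Delta i,j}^{\otimes q-1-p}\tilde\otimes \mathbf{1}_{\Delta i',j'}^{\otimes q-1-p}$. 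Since $\mathcal{H}^{\alpha,\beta}\cong \mathcal{H}^{\alpha}\otimes \mathcal{H}^{\beta}$, every inner product $\langle \mathbf{1}_{\Delta i,j},\mathbf{1}_{\Delta i',j'}\rangle_{\mathcal{H}^{\alpha,\beta}}$ splits as the product $\langle \mathbf{1}_{\Delta i},\mathbf{1}_{\Delta i'}\rangle_{\mathcal{H}^{\alpha}}\langle \mathbf{1}_{\Delta j},\mathbf{1}_{\Delta j'}\rangle_{\mathcal{H}^{\beta}}$, and consequently each multi-sum factorizes into an $N$-sum depending only on $\alpha$ times an $M$-sum depending only on $\beta$. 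These one-parameter sums are exactly the ones that appear in the standard fBm Hermite-variation analysis in dimension one, so the required estimates are those collected in Lemma \ref{lemma:estimqteforT_2} (and its variants) in the Appendix.

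The main obstacle is the bookkeeping across the five regimes. In each case one has to multiply the one-parameter estimate in the $\alpha$-direction by its counterpart in the $\beta$-direction, and then verify that the resulting bound on $\mathbb{E}[T_1^2]^{1/2}$ is dominated by the rate already obtained for $|1-q^{-1}T_2|$, so that the Berry-Ess\'een bound displayed in the statement is truly the dominant one. The delicate subcase is case 4), in which $\alpha<1-\tfrac{1}{2q}<\beta$: the tensorization produces a subcritical factor in $N$ paired with a supercritical factor in $M$, and the normalization $\varphi=\sqrt{q!/(s_\alpha \kappa_\beta)}\,N^{\alpha q-1/2}M^{q-1}$ is precisely calibrated so that the supercritical $\beta$-sum is tamed by the $M^{q-1}$ factor while the $\alpha$-sum produces the $N^{-1}+N^{2\alpha-2}+N^{2\alpha q-2q+1}$ contribution; case 5) is analogous with a logarithmic correction. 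Once this interplay is set up correctly, inserting the bounds into Theorem \ref{th:NourdinPeccati} yields the five stated Berry-Ess\'een inequalities.
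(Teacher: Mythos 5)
Your proposal follows essentially the same route as the paper: the Stein--Malliavin bound of Theorem \ref{th:NourdinPeccati} applied to the decomposition $\|D\tilde V_{N,M}\|^2_{\mathcal{H}^{\alpha,\beta}}=T_1+T_2$ from \eqref{eq:T_1+T_2}, with Lemma \ref{T_2} handling the deterministic term, and the isometry $\mathcal{H}^{\alpha,\beta}\cong\mathcal{H}^{\alpha}\otimes\mathcal{H}^{\beta}$ used to factorize $\mathbb{E}[T_1^2]$ into one-parameter four-index sums, exactly as in the paper's computation of $a_N$ and $b_M$. One small pointer: those four-index sums are controlled by Lemma \ref{lemma:estimateforT_1} in the Appendix, not by Lemma \ref{lemma:estimqteforT_2} (which only covers the two-index sums entering $T_2$), though your hedge ``and its variants'' suggests you had the right estimates in mind.
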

\begin{proof}
Recall that
\begin{eqnarray*}
\Vert D\tilde V_{N,M} \Vert ^{2} _{{\cal{H}}^{\alpha, \beta }
}&=&:T_1+T_2,
\end{eqnarray*}
where the summands $T_{1}$ and $T_{2}$ are given as in (\ref{eq:T_1+T_2}). We apply Lemma \ref{T_2} to see that $1-q^{-1}T_2$ converges to zero as $N,M$ goes to infinity.\\
Let us show that $T_1$ is converging to zero in $L^2(\Omega)$. We use the orthogonality of the iterated integrals to compute
\begin{eqnarray*}
\mathbb{E}T_1^2&=&\frac{1}{(q-1)!^4}(\varphi(\alpha,\beta,N,M))^4\sum_{i,i',k,k'=0}^{N-1} \sum_{j,j', \ell, \ell '=0}^{M-1}\sum_{p=0}^{q-2} (p!)^2\\
&&(C_{q-1}^{p})^4\langle \mathbf{1}_{\Delta i,j}(\cdot ), \mathbf{1}_{\Delta i',j'}(\cdot )\rangle  _{{\cal{H}}^{\alpha, \beta } }^{p+1}
\langle \mathbf{1}_{\Delta k,\ell}(\cdot ), \mathbf{1}_{\Delta k',\ell '}(\cdot )\rangle  _{{\cal{H}}^{\alpha, \beta } }^{p+1}\\
&&\times \mathbb{E} \left[I_{2q-2-2p}\left( \mathbf{1}_{\Delta i,j} ^{\otimes q-1-p}\tilde{ \otimes }
\mathbf{1}_{\Delta i',j'} ^{\otimes q-1-p}\right)
I_{2q-2-2p}\left( \mathbf{1}_{\Delta k,\ell} ^{\otimes q-1-p}\tilde{ \otimes }
\mathbf{1}_{\Delta k',\ell'} ^{\otimes q-1-p}\right)\right]\\
&=&\frac{1}{(q-1)!^4}(\varphi(\alpha,\beta,N,M))^4\sum_{i,i',k,k'=0}^{N-1} \sum_{j,j', \ell, \ell '=0}^{M-1}\sum_{p=0}^{q-2} (p!)^2\\
&&(C_{q-1}^{p})^4\langle \mathbf{1}_{\Delta i,j}(\cdot ), \mathbf{1}_{\Delta i',j'}(\cdot )\rangle  _{{\cal{H}}^{\alpha, \beta } }^{p+1}
\langle \mathbf{1}_{\Delta k,\ell}(\cdot ), \mathbf{1}_{\Delta k',\ell '}(\cdot )\rangle  _{{\cal{H}}^{\alpha, \beta } }^{p+1}\\
&&\times (2q-2-2p)! \langle \mathbf{1}_{\Delta i,j} ^{\otimes q-1-p}\tilde{ \otimes }
\mathbf{1}_{\Delta i',j'} ^{\otimes q-1-p}, \mathbf{1}_{\Delta k,\ell} ^{\otimes q-1-p}\tilde{ \otimes }
\mathbf{1}_{\Delta k',\ell '} ^{\otimes q-1-p}\rangle _{{\cal{H}}^{\alpha , \beta} }.\\
\end{eqnarray*}
Now, let us discuss the tensorized terms. We use the fact that
\begin{eqnarray*}
&&\langle \mathbf{1}_{\Delta i,j} ^{\otimes q-1-p}\tilde{ \otimes }
\mathbf{1}_{\Delta i',j'} ^{\otimes q-1-p}, \mathbf{1}_{\Delta k,\ell} ^{\otimes q-1-p}\tilde{ \otimes }
\mathbf{1}_{\Delta k,\ell '} ^{\otimes q-1-p}\rangle _{{\cal{H}}^{\alpha , \beta} }\\
&=&\sum_{a+b=q-1-p; c+d= q-1-p} \langle \mathbf{1}_{\Delta i,j}, \mathbf{1}_{\Delta k,\ell}\rangle ^{a} _{{\cal{H}}^{\alpha, \beta }}\langle \mathbf{1}_{\Delta i,j},\mathbf{1}_{\Delta k',\ell '}\rangle ^{b} _{{\cal{H}}^{\alpha, \beta}}\\
&&\times \langle \mathbf{1}_{\Delta i',j'}, \mathbf{1}_{\Delta k,\ell}\rangle ^{c} _{{\cal{H}}^{\alpha, \beta }}\langle \mathbf{1}_{\Delta i',j'},\mathbf{1}_{\Delta k',\ell '}\rangle ^{d} _{{\cal{H}}^{\alpha, \beta}}\\
&=&\sum_{a+b=q-1-p; c+d= q-1-p} \langle \mathbf{1}_{\Delta i},\mathbf{1}_{\Delta k}\rangle ^{a} _{{\cal{H}}^{\alpha }}
\langle \mathbf{1}_{\Delta j}, \mathbf{1}_{\Delta \ell}\rangle ^{a} _{{\cal{H}}^{\beta}}\\
&&\times
\langle \mathbf{1}_{\Delta i},\mathbf{1}_{\Delta k'}\rangle ^{b} _{{\cal{H}}^{\alpha }}
\langle \mathbf{1}_{\Delta j}, \mathbf{1}_{\Delta \ell '}\rangle ^{b} _{{\cal{H}}^{\beta}}
\langle \mathbf{1}_{\Delta i'},\mathbf{1}_{\Delta k}\rangle ^{c} _{{\cal{H}}^{\alpha }}
\langle \mathbf{1}_{\Delta j'}, \mathbf{1}_{\Delta \ell}\rangle ^{c} _{{\cal{H}}^{\beta}}\\
&&\times
\langle \mathbf{1}_{\Delta i'},\mathbf{1}_{\Delta k'}\rangle ^{d} _{{\cal{H}}^{\alpha }}
\langle \mathbf{1}_{\Delta j'}, \mathbf{1}_{\Delta \ell '}\rangle ^{d} _{{\cal{H}}^{\beta}}
\end{eqnarray*}
(we recall  that $\mathbf{1}_{\Delta i}:=\mathbf{1}_{[\frac{i}{N},\frac{i+1}{N}]}$). Therefore we finally have
\begin{eqnarray*}
\mathbb{E}T_1^2&=&\frac{1}{(q-1)!^4}(\varphi(\alpha,\beta,N,M))^4\sum_{p=0}^{q-2}(C_{q-1}^{p})^4 (p!)^2 \sum_{a+b=q-1-p; c+d= q-1-p}\\
&&a_{N}(p, \alpha, a,b,c,d )b_{M}(p, \beta, a, b,c ,d),
\end{eqnarray*}
with
\begin{eqnarray*}
a_{N}(p, \alpha, a, b, c,d)&=& \sum_{i,i',k,k'=0}^{N-1}\langle \mathbf{1}_{\Delta i},\mathbf{1}_{\Delta k}\rangle ^{a} _{{\cal{H}}^{\alpha }}\langle \mathbf{1}_{\Delta i},\mathbf{1}_{\Delta k'}\rangle ^{b} _{{\cal{H}}^{\alpha }}\\
&&\langle \mathbf{1}_{\Delta i'},\mathbf{1}_{\Delta k}\rangle ^{c} _{{\cal{H}}^{\alpha }}\langle \mathbf{1}_{\Delta i'},\mathbf{1}_{\Delta k'}\rangle ^{d} _{{\cal{H}}^{\alpha }}\\
&&\langle \mathbf{1}_{\Delta i},\mathbf{1}_{\Delta i'}\rangle ^{p+1} _{{\cal{H}}^{\alpha }}\langle \mathbf{1}_{\Delta k},\mathbf{1}_{\Delta k'}\rangle ^{p+1} _{{\cal{H}}^{\alpha }}
\end{eqnarray*}
and $b_{M}(p, \beta, a, b, c,d)$ similarly defined.
We apply Lemma \ref{lemma:estimateforT_1} stated in the Appendix to the terms $a_N$ and $b_M$ to conclude the convergence of $T_1$ to zero. Hence, $\mathbb{E}[(1 - q^{-1}\|D\tilde V_{N,M}\|_{\mathcal{H}^{\alpha,\beta}}^{2})^{2}]=q^{-2} \mathbb{E}[|T_1|^2] +(1-q^{-1} T_2)^2$ which converges to zero for $\alpha \leq 1-\frac{1}{2q}$ or $\beta \leq 1-\frac{1}{2q}$.  The bounds on the rate of convergence are given by the Lemmas \ref{lemma:estimateforT_1} and \ref{T_2}. Using Theorem \ref{th:NourdinPeccati}, the conclusion of the theorem follows.
\qed\end{proof}

\vskip0.5cm
The fact that the term $T_{1}$ converges to zero makes the difference between the situations treated in the above theorem and the non-central limit case proved in the next section.

\section{The Non Central Limit Theorem}
\label{section:non-central}

We will assume throughout this section that the Hurst parameters $\alpha , \beta$ satisfy
\begin{equation*}
1>\alpha , \beta >1-\frac{1}{2q}.
\end{equation*}
We will study the limit of the sequence $\tilde{V}_{N,M}$ given by the formula (\ref{tildeV}) with the renormalization factor $\varphi$ from Lemma \ref{T_2}, point 6.
Let us denote by $h_{N,M}$ the kernel of the random variable $\tilde{V}_{N,M}$ which is an element of the $q$th Wiener chaos, i.e.
\begin{eqnarray*}
h_{N,M}
&=& \frac{1}{q!} \varphi (\alpha ,\beta, N,M) \sum_{i=0}^{N-1} \sum_{j=0}^{M-1}  \mathbf{1}_{[\frac{i}{N}, \frac{i+1}{N}] \times [\frac{j}{M}, \frac{j+1}{M}] }^{\otimes q}.
\end{eqnarray*}
We will prove that $(h_{N,M})_{N,M\geq 1}$ is a Cauchy sequence in the Hilbert space $\left({\cal{H}}^{\alpha, \beta}\right) ^{\otimes q}$. Using relation (\ref{scaH}), we obtain
\begin{eqnarray*}
\langle h_{N,M}, h_{N',M'} \rangle _{\left({\cal{H}}^{\alpha, \beta}\right) ^{\otimes q}} &=& \frac{1}{q! ^{2}}\varphi(\alpha , \beta, N, M) \varphi (\alpha , \beta , N',M')\\
&&\times (\alpha (2\alpha -1) )^{q}\sum_{i=0}^{N-1} \sum_{i'=0}^{N'-1}  \left( \int_{\frac{i}{N}}^{\frac{i+1}{N}}\int_{\frac{i'}{N}}^{\frac{i'+1}{N}}\vert u-v\vert ^{2\alpha -2}dudv \right) ^{q} \\
&&\times (\beta (2\beta -1) )^{q} \sum_{j=0}^{M-1} \sum_{j'=1}^{M'-1} \left(\int_{\frac{j}{M}}^{\frac{j+1}{M}}
\int_{\frac{j'}{M'}}^{\frac{j'+1}{M'}}\vert u-v\vert ^{2\beta -2} dudv \right) ^{q}
\end{eqnarray*}
and this converges to (see also \cite{BretonNourdin} or \cite{TV})
\begin{equation*}
c_{2}(\alpha ,\beta )  \frac{1}{q! ^{2}}(\alpha (2\alpha -1) )^{q}(\beta(2\beta -1) )^{q} \int_{0}^{1} \int_{0}^{1} \vert u-v\vert ^{(2\alpha -2)q}dudv \int_{0}^{1} \int_{0}^{1} \vert u-v\vert ^{(2\beta -2)q}dudv,
\end{equation*}
where $c_{2}(\alpha , \beta)= \frac{q!}{\kappa _{\alpha } \kappa _{\beta}}$. The above constant is equal to
\begin{equation*}
c_{2}(\alpha ,\beta ) \frac{1}{q! ^{2}}(\alpha (2\alpha -1) )^{q}(\beta (2\beta -1) )^{q} \frac{1}{(\alpha q-q+1)(2\alpha q-2q+1)}\frac{1}{(\beta q-q+1)(2\beta q-2q+1)}.
\end{equation*}
\begin{remark}
Note that the above constant is actually $\frac{1}{q!}$.
\end{remark}
It follows that the sequence $h_{N,M}$ is Cauchy in the Hilbert space $\left({\cal{H}}^{\alpha, \beta}\right) ^{\otimes q}$ and as $N,M\to \infty$ it has a limit in $\left({\cal{H}}^{\alpha, \beta}\right) ^{\otimes q}$ denoted by $\mu  ^{(q)} $.
In the same way, the sequence
\begin{eqnarray*}
h_{N,M}(t,s)
&=& \frac{1}{q!} \varphi(\alpha, \beta ,N,M) \sum_{i=0}^{[(N-1)t]} \sum_{j=0}^{[(M-1)s]}  \mathbf{1}_{[\frac{i}{N}, \frac{i+1}{N}] \times [\frac{j}{M}, \frac{j+1}{M}] }^{\otimes q}
\end{eqnarray*}
is Cauchy in $\left({\cal{H}}^{\alpha, \beta}\right) ^{\otimes q}$ for every fixed $s,t$ and it has a limit in this Hilbert space which will be denoted by $\mu ^{(q)}_{s,t}$. Notice that $\mu ^{(q)}= \mu ^{(q)}(1,1)$ and that $\mu ^{(q)}$  is a normalized uniform measure on the set $([0,t] \times [0,s] ) ^{q}$.
\begin{definition}\label{hermitesheet}
We define the Hermite sheet process of order $q$ and with Hurst parameters $\alpha, \beta \in (0,1)$, denoted by $(Z^{(q)}_{t,s})_{s,t\in [0,1]}$,  by
\begin{equation*}
Z^{(q), \alpha, \beta}_{t,s}:=Z^{(q)}_{t,s} =I_{q} (\mu ^{(q)}_{s,t}), \hskip0.5cm \forall s,t \in [0,1].
\end{equation*}
\end{definition}
The previous computations lead to the following theorem.
\begin{theorem}
Let $\tilde{V}_{N,M}$ be given by (\ref{tildeV}) with the function $\varphi $ defined in Lemma  \ref{T_2}, point 6. Consider the Hermite sheet introduced in Definition \ref{hermitesheet}. Then for $q\geq 2$ it holds
$$ \lim_{N,M\to \infty} \mathbb{E}[|\tilde{V}_{N,M}-Z|^2]=0,$$
where $Z:=Z^{(q)}_{1,1}$.
\end{theorem}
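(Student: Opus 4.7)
\medskip

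\noindent\textbf{Proof plan.} The plan is to reduce everything to an $L^{2}$-isometry argument on the $q$-th Wiener chaos. Both $\tilde V_{N,M}$ and $Z$ live in $\mathfrak{H}_{q}$: by construction
\[
\tilde V_{N,M}=I_{q}(h_{N,M}),\qquad Z=I_{q}(\mu^{(q)}),
\]
where the kernels $h_{N,M}$ and $\mu^{(q)}$ are elements of $(\cH^{\alpha,\beta})^{\otimes q}$. Each summand $\mathbf{1}_{\Delta i,j}^{\otimes q}$ is already symmetric, hence so is $h_{N,M}$, and the symmetry passes to the limit $\mu^{(q)}$. I would first state these facts carefully, including the observation that $\mu^{(q)}$ is well-defined since $(h_{N,M})$ was shown, in the discussion preceding the theorem, to be Cauchy in $(\cH^{\alpha,\beta})^{\otimes q}$.

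\medskip

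Next, using the isometry
\[
\mathbb{E}\bigl[(I_{q}(f)-I_{q}(g))^{2}\bigr]=q!\,\|f-g\|_{(\cH^{\alpha,\beta})^{\otimes q}}^{2}\qquad\text{for symmetric }f,g,
\]
the statement reduces to showing
\[
\|h_{N,M}-\mu^{(q)}\|_{(\cH^{\alpha,\beta})^{\otimes q}}^{2}\longrightarrow 0\quad\text{as }N,M\to\infty.
\]
Expanding this squared norm yields
\[
\|h_{N,M}\|^{2}-2\,\langle h_{N,M},\mu^{(q)}\rangle+\|\mu^{(q)}\|^{2},
\]
so the job is to show all three inner products tend to $\|\mu^{(q)}\|^{2}$. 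The computation carried out just before the theorem produces
\[
\lim_{N,M,N',M'\to\infty}\langle h_{N,M},h_{N',M'}\rangle_{(\cH^{\alpha,\beta})^{\otimes q}}=c_{2}(\alpha,\beta)\,\frac{(\alpha(2\alpha-1)\beta(2\beta-1))^{q}}{(q!)^{2}(\alpha q-q+1)(2\alpha q-2q+1)(\beta q-q+1)(2\beta q-2q+1)},
\]
which with the stated choice of $\varphi$ simplifies to $1/q!$. Taking $N'=N$, $M'=M$ gives $\|h_{N,M}\|^{2}\to 1/q!$; using the definition of $\mu^{(q)}$ as the $(\cH^{\alpha,\beta})^{\otimes q}$-limit of $h_{N',M'}$ together with continuity of the inner product gives $\langle h_{N,M},\mu^{(q)}\rangle\to 1/q!$; and $\|\mu^{(q)}\|^{2}=1/q!$ by the same limiting argument. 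Hence the three quantities cancel.

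\medskip

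I do not expect any serious obstacle: the hard analytic input, namely the dominated-convergence-type estimates that identify the limit of the discrete Riemann sums $\sum_{i,i'}\bigl(\int\int|u-v|^{2\alpha-2}du\,dv\bigr)^{q}$ by the continuous integral $\int_{0}^{1}\!\!\int_{0}^{1}|u-v|^{(2\alpha-2)q}du\,dv$ (and analogously in $\beta$), has already been done before the theorem statement, and relies only on $\alpha,\beta>1-\tfrac{1}{2q}$ to guarantee that the exponent $(2\alpha-2)q>-1$ keeps the limiting integral finite. The only point requiring a small amount of care is to justify the interchange of limits when identifying $\langle h_{N,M},\mu^{(q)}\rangle$ with $\lim_{N',M'}\langle h_{N,M},h_{N',M'}\rangle$, which is immediate from the Hilbert-space continuity of the scalar product against the strong limit $\mu^{(q)}$. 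Concluding by Theorem~\ref{th:NourdinPeccati} is not needed here since the convergence is directly in $L^{2}$.
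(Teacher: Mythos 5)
Your proposal is correct and follows essentially the same route as the paper: the paper's proof likewise expands $\frac{1}{q!}\mathbb{E}\bigl[|\tilde{V}_{N,M}-Z|^{2}\bigr]=\|h_{N,M}\|^{2}_{(\mathcal{H}^{\alpha,\beta})^{\otimes q}}+\|\mu^{(q)}\|^{2}_{(\mathcal{H}^{\alpha,\beta})^{\otimes q}}-2\langle h_{N,M},\mu^{(q)}\rangle_{(\mathcal{H}^{\alpha,\beta})^{\otimes q}}$ and appeals to the inner-product computation (with limit $1/q!$) carried out just before the theorem. You merely spell out the details the paper leaves implicit (symmetry of the kernels, the isometry constant $q!$, and the continuity-of-inner-product step identifying $\langle h_{N,M},\mu^{(q)}\rangle$), so there is nothing to correct.
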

\begin{proof}
Note that $\frac{1}{q!}\mathbb{E}[|\tilde{V}_{N,M}-Z|^2]=\|h_{N,M}\|_{(\mathcal{H}^{\alpha,\beta})^{\otimes q}}^2 + \|\mu^{(q)}\|_{(\mathcal{H}^{\alpha,\beta})^{\otimes q}}^2 - 2 \langle h_{N,M}, \mu^{(q)}\rangle_{(\mathcal{H}^{\alpha,\beta})^{\otimes q}}$. The computations of the beginning of this section complete the proof.
\qed\end{proof}

\vskip0.3cm

Let us prove below some basic properties of the Hermite sheet.
\begin{prop}Let us consider the Hermite sheet $(Z^{(q)}_{s,t})_{s,t\in [0,1]}$ from Definition \ref{hermitesheet}. We have the following:
\begin{description}
\item{a) } The covariance of the Hermite sheet is given by
\begin{equation*}
\mathbb{E} Z^{(q)}_{s,t} Z^{(q)} _{u,v} = R_{q(\alpha-1)+1 } (s,u) R_{q(\beta-1)+1 } (t,v).
\end{equation*}
Consequently, it has the same covariance as the fractional Brownian sheet with Hurst parameters $q(\alpha-1)+1$ and $q(\beta-1)+1$.
\item{b) } The Hermite process is self-similar in the following sense: for every $c,d>0$,  the process
\begin{equation*}
\hat{Z} ^{(q)}_{s,t}:= (Z^{(q)})_{cs, dt}
\end{equation*}
has the same law as $c^{q(\alpha -1)+1} d^{q(\beta -1)+1}Z^{(q)}_{s,t}$.
\end{description}
\item{c) } The Hermite process has stationary increments in the sense of  Definition \ref{stationary}.
\item{d) } The paths are H\"older continuous of order $(\alpha ', \beta ') $ with $0<\alpha '<\alpha $ and $0<\beta '<\beta$.
\end{prop}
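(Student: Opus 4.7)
My plan is to establish (a) by a direct covariance computation, (b) and (c) by lifting Hilbert-space isometries on $\mathcal{H}^{\alpha,\beta}$ to the $q$th Wiener chaos, and (d) by combining (a) with hypercontractivity and Kolmogorov's continuity criterion. For (a), I would start from $Z^{(q)}_{s,t}=I_q(\mu^{(q)}_{s,t})$ and use the isometry relation $\mathbb{E}[I_q(f)I_q(g)]=q!\,\langle f,g\rangle_{(\mathcal{H}^{\alpha,\beta})^{\otimes q}}$ to reduce the covariance to $q!\,\langle \mu^{(q)}_{s,t},\mu^{(q)}_{u,v}\rangle$. Repeating the computation carried out just before Definition~\ref{hermitesheet} but with truncation levels $(s,t)$ and $(u,v)$ in place of $(1,1)$, and using the factorization \eqref{scaH} of the scalar product on $\mathcal{H}^{\alpha,\beta}$, this inner product splits as an $\alpha$-factor times a $\beta$-factor, each converging to an expression of the shape $(\alpha(2\alpha-1))^{q}\int_{0}^{s}\int_{0}^{u}|x-y|^{(2\alpha-2)q}\,dx\,dy$ (times the normalizing constants from Lemma~\ref{T_2}). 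Writing $H_{\alpha}:=q(\alpha-1)+1$, which lies in $(1/2,1)$ precisely under the standing hypothesis $\alpha>1-\tfrac{1}{2q}$, the classical identity $H(2H-1)\int_{0}^{a}\int_{0}^{b}|x-y|^{2H-2}\,dx\,dy=R_{H}(a,b)$ identifies each factor as $R_{H_{\alpha}}(s,u)$, respectively $R_{H_{\beta}}(t,v)$, after a routine tracking of constants (they must collapse correctly, since $\|\mu^{(q)}\|^{2}=1/q!$ by construction).

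For (b) and (c), the idea is to realize the scaling and translation of the index $(s,t)$ as isometries acting on $\mathcal{H}^{\alpha,\beta}$ itself, and to use that any Hilbert-space isometry of the first chaos preserves the joint law of the associated multiple Wiener-It\^o integrals. Concretely, I would define $\Phi_{c,d}(\mathbf{1}_{[0,s]\times[0,t]}):=c^{\alpha}d^{\beta}\mathbf{1}_{[0,s/c]\times[0,t/d]}$ and $\Psi_{a,b}(\mathbf{1}_{A\times B}):=\mathbf{1}_{(A+a)\times(B+b)}$, the latter on the subspace of functions supported in $[0,1-a]\times[0,1-b]$. The self-similarity and stationary-increments properties of $W^{\alpha,\beta}$ recalled after Definition~\ref{self} are exactly what make $\Phi_{c,d}$ and $\Psi_{a,b}$ isometric on the appropriate dense subspaces, hence extendible to genuine isometries of the Hilbert space. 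Their tensor powers $\Phi_{c,d}^{\otimes q}$ and $\Psi_{a,b}^{\otimes q}$ act on the prelimit kernels $h_{N,M}(t,s)$ explicitly, and by continuity this action passes to the limit, yielding $\Phi_{c,d}^{\otimes q}\mu^{(q)}_{s,t}=(c^{\alpha}d^{\beta})^{q}\mu^{(q)}_{cs,dt}$ and the analogous identity for $\Psi_{a,b}^{\otimes q}$. Since an isometry of the first chaos lifts to a law-preserving change of the underlying isonormal Gaussian structure, (b) and (c) follow.

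For (d), part (a) gives $\mathbb{E}[(Z^{(q)}_{s,t}-Z^{(q)}_{s',t'})^{2}]\leq C(|s-s'|^{2H_{\alpha}}+|t-t'|^{2H_{\beta}})$. Since $Z^{(q)}$ lives in the $q$th Wiener chaos, hypercontractivity upgrades this to $\mathbb{E}[(Z^{(q)}_{s,t}-Z^{(q)}_{s',t'})^{2p}]\leq C_{p}(|s-s'|^{2H_{\alpha}}+|t-t'|^{2H_{\beta}})^{p}$ for every $p\geq 1$. The two-parameter Kolmogorov continuity criterion, applied with $p$ large enough, then yields H\"older continuity of any order strictly less than $(H_{\alpha},H_{\beta})$; since $H_{\gamma}-\gamma=(q-1)(\gamma-1)<0$ for $\gamma\in\{\alpha,\beta\}$, any such exponent is a fortiori strictly less than $(\alpha,\beta)$, as claimed. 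The main technical obstacle is (b) and (c): because $Z^{(q)}$ is non-Gaussian for $q\geq 2$, matching only covariances would not suffice, so one must argue at the level of the kernels and carefully verify that $\Phi_{c,d}^{\otimes q}$ and $\Psi_{a,b}^{\otimes q}$ act on $\mu^{(q)}_{s,t}$ in the expected way, a step which relies on the explicit action on the prelimit kernels, density, and Hilbert-space continuity of the isometries.
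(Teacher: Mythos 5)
Your treatment of a) coincides with the paper's proof (both reduce the covariance to $q!\,\langle \mu^{(q)}_{s,t},\mu^{(q)}_{u,v}\rangle_{(\mathcal{H}^{\alpha,\beta})^{\otimes q}}$ and evaluate the limit of the prelimit pairings via \eqref{scaH}), and your isometry-lifting scheme for b) and c) is a cleaner formalization of what the paper does in \eqref{n1}--\eqref{n2} and via the stationarity of the increments of $W^{\alpha,\beta}$. However, your key scaling identity in b) is wrong: $\Phi_{c,d}^{\otimes q}\mu^{(q)}_{s,t}=(c^{\alpha}d^{\beta})^{q}\mu^{(q)}_{cs,dt}$ cannot hold. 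First, $\Phi_{c,d}$ as you define it contracts supports, so it carries $\mu^{(q)}_{s,t}$ to a multiple of $\mu^{(q)}_{s/c,t/d}$, not of $\mu^{(q)}_{cs,dt}$; but more importantly, the prelimit kernels $h_{N,M}(t,s)$ are not bare sums of tensor powers of indicators: they carry the normalization $\varphi(\alpha,\beta,N,M)=\sqrt{q!/(\kappa_\alpha\kappa_\beta)}\,N^{q-1}M^{q-1}$, which under the change of mesh $N\mapsto cN$, $M\mapsto dM$ contributes an extra factor $c^{-(q-1)}d^{-(q-1)}$. The correct identity is
\begin{equation*}
\Phi_{c,d}^{\otimes q}\,\mu^{(q)}_{s,t}\;=\;c^{\,q(\alpha-1)+1}\,d^{\,q(\beta-1)+1}\,\mu^{(q)}_{s/c,\,t/d},
\end{equation*}
as forced by the isometry of $\Phi_{c,d}$ together with $q!\,\Vert \mu^{(q)}_{s,t}\Vert^{2}_{(\mathcal{H}^{\alpha,\beta})^{\otimes q}}=s^{2(q(\alpha-1)+1)}t^{2(q(\beta-1)+1)}$ from a). With your constant you would derive self-similarity of order $(q\alpha,q\beta)$, contradicting both a) and the very statement of b). The fix is routine once you track $\varphi$ through the scaling (this is exactly the bookkeeping the paper performs in \eqref{n2}), but as written your proof of b) establishes the wrong exponent.

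In d) your final inference is backwards. Your moment bound, hypercontractivity, and the two-parameter Kolmogorov criterion yield H\"older continuity of all orders strictly below $(H_\alpha,H_\beta)=(q(\alpha-1)+1,\,q(\beta-1)+1)$. Since $H_\alpha<\alpha$ and $H_\beta<\beta$ for $q\geq 2$, this is strictly \emph{weaker} than the stated claim, which asserts H\"older continuity for every $\alpha'<\alpha$ and $\beta'<\beta$: exponents $\alpha'\in[H_\alpha,\alpha)$ are not reached by your argument, nor can they be, since $\mathbb{E}\bigl[(Z^{(q)}_{s,1}-Z^{(q)}_{s',1})^{2}\bigr]=|s-s'|^{2H_\alpha}$ and the equivalence of moments on a fixed Wiener chaos rule out almost sure H\"older exponents above $H_\alpha$. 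In fairness, the paper's own one-line proof of d) (Kolmogorov's criterion combined with b) and c)) delivers exactly the same range $(H_\alpha,H_\beta)$, so the exponents $(\alpha,\beta)$ in the statement appear to be a slip in the paper itself; but you should then say explicitly that you prove the corrected version, rather than assert that the claimed range follows ``a fortiori'' --- that step is a non sequitur.
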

\begin{proof}
Let $f$ be an arbitrary function in $\left( {\cal{H}}^{\alpha , \beta }\right) ^{\otimes q}$. It holds that
\begin{eqnarray*}
&&\langle h_{N,M}(t,s) , f\rangle _{\left( {\cal{H}}^{\alpha , \beta }\right) ^{\otimes q}} \\
&=&c_{2}(\alpha , \beta ) ^{-1/2}\frac{N^{q-1}M^{q-1}}{q!} \sum_{i=0}^{[(N-1)t]} \sum_{j=0}^{[(M-1)s]}  \langle \mathbf{1}_{[\frac{i}{N}, \frac{i+1}{N}] \times [\frac{j}{M}, \frac{j+1}{M}] }^{\otimes q}, f\rangle  _{\left( {\cal{H}}^{\alpha , \beta }\right) ^{\otimes q}}\\
&=&a(\alpha )^{q}a(\beta)^{q}c_{2}(\alpha , \beta ) ^{-1/2}\frac{N^{q-1}M^{q-1}}{q!} \sum_{i=0}^{[(N-1)t]} \sum_{j=0}^{[(M-1)s]} \\
&&\int_{[0,1] ^{2q}}dx_{1}..dx_{q}dy_{1}..dy_{q} f\left( (x_{1},y_{1}),..,(x_{q}, y_{q})\right) \times \int_{[\frac{i}{N}, \frac{i+1}{N}]^{q}}da_{1}..da_{q}\int_{ [\frac{j}{M}, \frac{j+1}{M}]^{q}}db_{1}..db_{q}\\
&&\times \prod_{k=1}^{q} \vert a_{k}-x_{k}\vert ^{2\alpha -2}\prod_{k=1}^{q}\vert b_{k}-y_{k} \vert ^{2\beta -2}\\
&\overset{N,M\to\infty}{\to}& a(\alpha )^{q}a(\beta)^{q}c_{2}(\alpha , \beta ) ^{-1/2}\frac{1}{q!} \int_{0}^{t}da\int_{0}^{s}db \int_{[0,1] ^{2q}}dx_{1}..dx_{q}dy_{1}..dy_{q}\\
&&\times f\left( (x_{1},y_{1}),..,(x_{q}, y_{q})\right)\prod_{k=1}^{q} \vert a-x_{k}\vert ^{2\alpha -2} \prod_{k=1}^{q}\vert b-y_{k} \vert ^{2\beta -2}.
\end{eqnarray*}
By applying the above formula for $f=\mu ^{(q)}_{u,v}$ and using the fact that
\begin{equation*}
\mathbb{E}\left( Z^{(q)}_{s,t} Z^{(q)}_{u,v}\right) =q! \langle \mu ^{(q)}_{s,t}, \mu^{(q)}_{u,v}\rangle _{\left( {\cal{H}}^{\alpha , \beta }\right) ^{\otimes q}},
\end{equation*}
we obtain the point a).\\
Concerning b), let us denote by
\begin{equation*}
H_{N,M}(t,s)=\frac{1}{q!} \varphi(\alpha,\beta,N,M)\sum_{i=0} ^{[(N-1)t]}\sum_{j=0}^{[(M-1)]s} I_{q} \left(\mathbf{1}_{[\frac{i}{N}, \frac{i+1}{N}]\times [\frac{j}{M}, \frac{j+1}{M}]} ^{\otimes q} \right).
\end{equation*}
We know that
\begin{equation}\label{n1}
H_{cN, dM}(t,s)\overset{N,M\to \infty}{\to} Z^{(q)}_{t,s}
\end{equation}
in $L^{2}(\Omega)$ for every $s,t\in [0 ,1]$. But
\begin{eqnarray}
H_{cN, dM}(t,s)&=&\frac{c_{2}(\alpha , \beta ) ^{-1/2}}{(cN)^{1-(1-\alpha )q}(dM)^{1-(1-\beta) q}} \sum_{i=0}^{[(N-1)t]} \sum_{j=0}^{[(M-1)s]} I_{q} \left(\mathbf{1}_{[\frac{i}{N}, \frac{i+1}{N}]\times [\frac{j}{M}, \frac{j+1}{M}]} ^{\otimes q} \right) \nonumber\\
&=& \frac{1}{(c)^{1-(1-\alpha )q}(d)^{1-(1-\beta) q}}H_{N,M}(t,s) \overset{N,M\to \infty}{\to}  Z^{(q)} _{t,s}\label{n2}.
\end{eqnarray}
The point b) follows easily from (\ref{n1}) and (\ref{n2}).\\
Point c) is a consequence of the fact that the fractional Brownian sheet has stationary increments in the sense of Definition \ref{stationary} while point d) can be easily proved by using  Kolmogorov continuity criterion together with points b) and c) above (see also Section 4, page 35-36 in \cite{AyacheLegerPontier2002}).
\qed\end{proof}

\section{Appendix}
We recall the following two technical lemmas which have been proved in \cite{NoPe1} and \cite{BretonNourdin}.

\begin{lemma}
\label{lemma:estimqteforT_2} Let $\gamma$ in $(0,1)$ and $q$ be an
integer with $q\geq 2$. We set
$$ r_\gamma(z):=\frac12 \left(|z+1|^{2\gamma}+|z-1|^{2\gamma}-2|z|^{2\gamma}\right), \quad z \in \mathbb{Z}.$$
We have:
\begin{itemize}
\item[(i)] If $0<\gamma<1-\frac{1}{2q}$, then
$$ \lim_{N\to \infty} N^{2\gamma q-1} \sum_{i,i'=0}^{N-1} \langle \mathbf{1}_{\Delta i}, \mathbf{1}_{\Delta i'}\rangle_{{\cal{H}}^{\gamma} }^{q} = \sum_{r\in \mathbb{Z}} r_\gamma(z)^q=:s_\gamma,$$
and $\vert N^{2\gamma q-1} \sum_{i,i'=0}^{N-1} \langle \mathbf{1}_{\Delta i},
\mathbf{1}_{\Delta i'}\rangle_{{\cal{H}}^{\gamma} }^{q}-s_\gamma\vert
\unlhd N^{-1}+N^{2q\gamma-2q+1}$.
\item[(ii)] If $\gamma=1-\frac{1}{2q}$, then
$$\lim_{N\to \infty} \log(N)^{-1} N^{2 q-2} \sum_{i,i'=0}^{N-1} \langle \mathbf{1}_{\Delta i}, \mathbf{1}_{\Delta i'}\rangle_{{\cal{H}}^{\gamma} }^{q}=2 \left(\frac{(2q-1)(q-1)}{2q^2}\right)^q =:\iota_\gamma,$$
and $\vert \log(N)^{-1} N^{2 q-2} \sum_{i,i'=0}^{N-1} \langle
\mathbf{1}_{\Delta i}, \mathbf{1}_{\Delta i'}\rangle_{{\cal{H}}^{\gamma}
}^{q}-\iota_\gamma \vert \unlhd \log(N)^{-1}$.
\item[(iii)] If $\gamma>1-\frac{1}{2q}$, then
$$\lim_{N\to \infty} N^{2q-2} \sum_{i,i'=0}^{N-1} \langle \mathbf{1}_{\Delta i}, \mathbf{1}_{\Delta i'}\rangle_{{\cal{H}}^{\gamma} }^{q}= \frac{\gamma^q(2\gamma-1)^q}{(\gamma q-q+1)(2\gamma q-2q+1)}=:\kappa_\gamma,$$
and $\vert N^{2q-2} \sum_{i,i'=0}^{N-1} \langle \mathbf{1}_{\Delta i},
\mathbf{1}_{\Delta i'}\rangle_{{\cal{H}}^{\gamma} }^{q}-\kappa_\gamma
\vert \unlhd N^{2q-1-2 \gamma q}$.
\end{itemize}
\end{lemma}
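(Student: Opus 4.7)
The plan is to reduce everything to a single sum indexed by the difference $z=i-i'$ and then analyze the asymptotics of that sum through the three regimes dictated by the summability threshold of $z\mapsto r_\gamma(z)^q$. First I would compute the inner product explicitly: using the covariance $R_\gamma$ of the one-parameter fBm one checks that
\begin{equation*}
\langle \mathbf{1}_{\Delta i}, \mathbf{1}_{\Delta i'}\rangle_{\mathcal{H}^{\gamma}} \;=\; \frac{1}{2 N^{2\gamma}}\bigl(|i-i'+1|^{2\gamma}+|i-i'-1|^{2\gamma}-2|i-i'|^{2\gamma}\bigr) \;=\; \frac{1}{N^{2\gamma}}\, r_\gamma(i-i'),
\end{equation*}
so that grouping pairs by their difference gives the pivotal identity
\begin{equation*}
\sum_{i,i'=0}^{N-1}\langle \mathbf{1}_{\Delta i},\mathbf{1}_{\Delta i'}\rangle_{\mathcal{H}^{\gamma}}^{q} \;=\; \frac{1}{N^{2q\gamma}}\sum_{|z|<N}(N-|z|)\, r_\gamma(z)^{q}.
\end{equation*}
The next ingredient is the large-$z$ Taylor expansion $r_\gamma(z)=\gamma(2\gamma-1)|z|^{2\gamma-2}+O(|z|^{2\gamma-4})$, obtained by writing $|z\pm 1|^{2\gamma}=|z|^{2\gamma}(1\pm 1/z)^{2\gamma}$ and expanding. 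Raising to the $q$th power yields
\begin{equation*}
r_\gamma(z)^{q} \;=\; \gamma^{q}(2\gamma-1)^{q}|z|^{q(2\gamma-2)}\bigl(1+O(|z|^{-2})\bigr),
\end{equation*}
whose summability in $z$ is governed by whether $q(2\gamma-2)<-1$, i.e.\ whether $\gamma<1-1/(2q)$.

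For (i), where $\gamma<1-\tfrac{1}{2q}$, the series $\sum_{z\in\mathbb{Z}}r_\gamma(z)^q=s_\gamma$ converges. I would rewrite
\begin{equation*}
N^{2q\gamma-1}\sum_{i,i'}\langle\mathbf{1}_{\Delta i},\mathbf{1}_{\Delta i'}\rangle^{q} \;=\; \sum_{|z|<N} r_\gamma(z)^{q} \;-\; \frac{1}{N}\sum_{|z|<N}|z|\, r_\gamma(z)^{q},
\end{equation*}
then control the two error terms: the tail $\sum_{|z|\ge N}r_\gamma(z)^q$ integrates to $O(N^{2q\gamma-2q+1})$ using the Taylor estimate, and the boundary term $\tfrac1N\sum_{|z|<N}|z|r_\gamma(z)^q$ is $O(N^{-1})$ when $\sum |z|r_\gamma(z)^q<\infty$ (i.e.\ $\gamma<1-1/q$) and is $O(N^{2q\gamma-2q+1})$ otherwise, so overall $O(N^{-1}+N^{2q\gamma-2q+1})$. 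For (iii), where $\gamma>1-\tfrac{1}{2q}$, the series diverges and one does a Riemann-sum reduction: substitute $z=Nu$ in
\begin{equation*}
N^{2q-2-2q\gamma}\sum_{|z|<N}(N-|z|)r_\gamma(z)^{q}
\end{equation*}
and use $r_\gamma(z)^q\sim \gamma^q(2\gamma-1)^q|z|^{q(2\gamma-2)}$ to get
\begin{equation*}
\gamma^q(2\gamma-1)^q\cdot 2\!\int_0^1\!(1-u)u^{q(2\gamma-2)}du \;=\; \frac{\gamma^q(2\gamma-1)^q}{(\gamma q-q+1)(2\gamma q-2q+1)}\;=\;\kappa_\gamma,
\end{equation*}
with the error controlled by the $O(|z|^{-2})$ correction in $r_\gamma$, yielding the rate $O(N^{2q-1-2\gamma q})$.

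The borderline case (ii), $\gamma=1-\tfrac{1}{2q}$, is the most delicate and will be the main obstacle. Here $q(2\gamma-2)=-1$, so $r_\gamma(z)^q\sim \gamma^q(2\gamma-1)^q |z|^{-1}$ and the series $\sum |z|<N$ produces a harmonic-like sum. The clean asymptotic $\sum_{z=1}^{N-1}\tfrac{N-z}{z}=N\log N + O(N)$ plugged into
\begin{equation*}
\log(N)^{-1}\,N^{-1}\sum_{|z|<N}(N-|z|)r_\gamma(z)^{q}
\end{equation*}
yields the limit $2\gamma^q(2\gamma-1)^q=\iota_\gamma$ after substituting $\gamma=(2q-1)/(2q)$ and $2\gamma-1=(q-1)/q$. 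The care needed is to show that the difference between $r_\gamma(z)^q$ and its leading-order proxy $\gamma^q(2\gamma-1)^q|z|^{-1}$ contributes only $O(1)$ to the sum (so $O(\log N)^{-1}$ after normalization), which follows from the $O(|z|^{-3})$ remainder in the expansion of $r_\gamma(z)^q$; the surviving $O(N)$ from the harmonic sum likewise produces only $O(\log N)^{-1}$. I would conclude by collecting these estimates, which match the claimed rates in each of the three cases and identify the limits as $s_\gamma$, $\iota_\gamma$, $\kappa_\gamma$.
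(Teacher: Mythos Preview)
Your argument is correct and essentially carries out the direct computation that the paper delegates to the references \cite{NoPe1} and \cite{BretonNourdin} for parts (i) and (ii): the reduction via $\langle \mathbf{1}_{\Delta i},\mathbf{1}_{\Delta i'}\rangle_{\mathcal{H}^\gamma}=N^{-2\gamma}r_\gamma(i-i')$, the grouping into $\sum_{|z|<N}(N-|z|)r_\gamma(z)^q$, and the Taylor asymptotic $r_\gamma(z)\sim\gamma(2\gamma-1)|z|^{2\gamma-2}$ are exactly the ingredients those papers use, and your error bookkeeping matches the stated rates.

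For part (iii) your route differs from the paper's. The paper argues abstractly: it sets $f_N=N^{q-1}\sum_k \mathbf{1}_{[k/N,(k+1)/N]}^{\otimes q}$, observes that the quantity of interest is $\|f_N\|^2_{(\mathcal{H}^\gamma)^{\otimes q}}$, shows $(f_N)$ is Cauchy in $(\mathcal{H}^\gamma)^{\otimes q}$ with limit $f$ of norm $\sqrt{\kappa_\gamma}$, and then bounds $\|f_N\|^2-\|f\|^2$ via $\|f_N-f\|^2+2\langle f_N-f,f\rangle$, importing the rate $\|f_N-f\|=O(N^{q-\frac12-\gamma q})$ from \cite{BretonNourdin}. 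You instead do a direct Riemann-sum passage after substituting the large-$z$ asymptotic of $r_\gamma(z)^q$, and read off both $\kappa_\gamma=\gamma^q(2\gamma-1)^q\cdot 2\int_0^1(1-u)u^{q(2\gamma-2)}du$ and the rate from the small-$z$ boundary contribution. Both arguments are valid; yours is more elementary and self-contained, while the paper's Hilbert-space formulation dovetails with the non-central limit construction in Section~\ref{section:non-central}, where the same Cauchy sequence $f_N$ (in the two-parameter setting) produces the kernel $\mu^{(q)}$ of the Hermite sheet. The paper even remarks parenthetically that ``a direct argument as in the proof of the next lemma can be also employed,'' which is precisely what you did.
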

\begin{proof}
The first two claims can be found respectively in
\cite[p.~102]{NoPe1}, \cite[p.~491-492]{BretonNourdin}. For the
third part we define $f_N:=N^{q-1}\sum_{k=0}^{N-1}
\mathbf{1}_{[\frac{k}{N},\frac{k+1}{N}]}^{\otimes q}$. Then $f_N$ is a Cauchy
sequence in $(\mathcal{H}^\gamma)^{\otimes q}$ with limit $f$ and
$\Vert f \Vert^2_{(H^\gamma)^{\otimes q}}=\kappa_\gamma.$ For the
rate of convergence we have
\begin{eqnarray*}
\Vert f_N \Vert^2_{(H^\gamma)^{\otimes q}}-\Vert f \Vert^2_{(H^\gamma)^{\otimes q}}&=&\Vert f_N-f \Vert^2_{(H^\gamma)^{\otimes q}}+2\langle f_N-f,f\rangle_{(H^\gamma)^{\otimes q}}\\
&\leq& \Vert f_N-f \Vert^2_{(H^\gamma)^{\otimes q}} +2\Vert f_N-f
\Vert^2_{(H^\gamma)^{\otimes q}}\Vert f \Vert^2_{(H^\gamma)^{\otimes
q}}.
\end{eqnarray*}
Refer to  \cite[Proposition 3.1]{BretonNourdin} to see the details and to get that the order is $O(N^{2q-1-2 \gamma q})$ (a direct argument as in the proof of the next lemma can be also employed).\\\\
\qed\end{proof}
We also state the following estimates which have been
obtained respectively in \cite[p.~102, p.~104]{NoPe1} and in
\cite[p.~491-492]{BretonNourdin}.
\begin{lemma}
\label{lemma:estimateforT_1} Let $\gamma$ in $(0,1)$. We set
$q,p,a,b,c,d$ integers such that: $q\geq 2$, $p \in \{0,\ldots,
q-2\}$ and $a+b=c+d=q-1-p$. We have:
\begin{itemize}
\item[(i)] If $0<\gamma<1-\frac{1}{2q}$, then
\begin{eqnarray*}
&&N^{4q\gamma-2} \sum_{i,i',k,k'=0}^{N-1} \langle \mathbf{1}_{\Delta i}, \mathbf{1}_{\Delta i'}\rangle_{{\cal{H}}^{\gamma} }^{p+1} \langle \mathbf{1}_{\Delta k}, \mathbf{1}_{\Delta k'}\rangle_{{\cal{H}}^{\gamma} }^{p+1} \langle \mathbf{1}_{\Delta i}, \mathbf{1}_{\Delta k}\rangle_{{\cal{H}}^{\gamma} }^{a} \langle \mathbf{1}_{\Delta i'}, \mathbf{1}_{\Delta k'}\rangle_{{\cal{H}}^{\gamma} }^{b}\\
&&\quad \quad \quad \times \langle \mathbf{1}_{\Delta i'}, \mathbf{1}_{\Delta k}\rangle_{{\cal{H}}^{\gamma} }^{c} \langle \mathbf{1}_{\Delta i'}, \mathbf{1}_{\Delta k'}\rangle_{{\cal{H}}^{\gamma} }^{d}\\
&\unlhd& N^{-1}+N^{2\gamma-2}+N^{2\gamma q -2q+1}.
\end{eqnarray*}
\item[(ii)] If $\gamma=1-\frac{1}{2q}$, then
\begin{eqnarray*}
&&\frac{N^{4q-4}}{\log(N)^2} \sum_{i,i',k,k'=0}^{N-1} \langle \mathbf{1}_{\Delta i}, \mathbf{1}_{\Delta i'}\rangle_{{\cal{H}}^{\gamma} }^{p+1} \langle \mathbf{1}_{\Delta k}, \mathbf{1}_{\Delta k'}\rangle_{{\cal{H}}^{\gamma} }^{p+1} \langle \mathbf{1}_{\Delta i}, \mathbf{1}_{\Delta k}\rangle_{{\cal{H}}^{\gamma} }^{a} \langle \mathbf{1}_{\Delta i'}, \mathbf{1}_{\Delta k'}\rangle_{{\cal{H}}^{\gamma} }^{b}\\
&&\quad \quad \quad \times \langle \mathbf{1}_{\Delta i'}, \mathbf{1}_{\Delta k}\rangle_{{\cal{H}}^{\gamma} }^{c} \langle \mathbf{1}_{\Delta i'}, \mathbf{1}_{\Delta k'}\rangle_{{\cal{H}}^{\gamma} }^{d}\\
&\unlhd&\log(N)^{-1}.
\end{eqnarray*}
\item[(iii)] If $\gamma>1-\frac{1}{2q}$, then
\begin{eqnarray*}
&&N^{4q-4} \sum_{i,i',k,k'=0}^{N-1} \langle \mathbf{1}_{\Delta i}, \mathbf{1}_{\Delta i'}\rangle_{{\cal{H}}^{\gamma} }^{p+1} \langle \mathbf{1}_{\Delta k}, \mathbf{1}_{\Delta k'}\rangle_{{\cal{H}}^{\gamma} }^{p+1} \langle \mathbf{1}_{\Delta i}, \mathbf{1}_{\Delta k}\rangle_{{\cal{H}}^{\gamma} }^{a} \langle \mathbf{1}_{\Delta i'}, \mathbf{1}_{\Delta k'}\rangle_{{\cal{H}}^{\gamma} }^{b}\\
&&\quad \quad \quad \times \langle \mathbf{1}_{\Delta i'}, \mathbf{1}_{\Delta k}\rangle_{{\cal{H}}^{\gamma} }^{c} \langle \mathbf{1}_{\Delta i'}, \mathbf{1}_{\Delta k'}\rangle_{{\cal{H}}^{\gamma} }^{d}\\
&\unlhd &1.
\end{eqnarray*}
\end{itemize}
\end{lemma}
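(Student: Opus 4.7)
The plan is to first rewrite every inner product via the identity
$$\langle \mathbf{1}_{\Delta i}, \mathbf{1}_{\Delta j}\rangle_{\mathcal{H}^\gamma} = \frac{1}{2 N^{2\gamma}}\, r_\gamma(i-j),$$
which follows directly from the scalar product formula on $\mathcal{H}^\gamma$ and the definition of $r_\gamma$ given in Lemma \ref{lemma:estimqteforT_2}. Since the six factors in the integrand carry total exponent $2(p+1)+(a+b+c+d) = 2(p+1) + 2(q-1-p) = 2q$, this substitution contributes a prefactor $(2N^{2\gamma})^{-2q} = c_q N^{-4q\gamma}$. Combining with the external normalizations $N^{4q\gamma-2}$, $N^{4q-4}/\log(N)^2$, $N^{4q-4}$ in (i)--(iii), the claim reduces to bounding
$$S_N := N^{-2} \sum_{i,i',k,k'=0}^{N-1} |r_\gamma(i-i')|^{p+1} |r_\gamma(k-k')|^{p+1} |r_\gamma(i-k)|^a |r_\gamma(i-k')|^b |r_\gamma(i'-k)|^c |r_\gamma(i'-k')|^d$$
by $N^{-1}+N^{2\gamma-2}+N^{2\gamma q-2q+1}$, $\log(N)^{-1}$, and $1$, respectively.

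The analytic inputs are the pointwise estimate $|r_\gamma(z)| \leq C_\gamma(1\wedge|z|^{2\gamma-2})$ and the resulting dichotomy: $\sum_{|z|\leq N} |r_\gamma(z)|^m$ is convergent, grows like $\log N$, or grows like $N^{m(2\gamma-2)+1}$ according as $m > 1/(2-2\gamma)$, $m = 1/(2-2\gamma)$, or $m < 1/(2-2\gamma)$. This is precisely the dichotomy that distinguishes the three regimes in the statement, and it links the present lemma to Lemma \ref{lemma:estimqteforT_2}.

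The key step is a decoupling of the four indices via Cauchy--Schwarz applied to the pairs $(i,i')$ and $(k,k')$. A bookkeeping check shows that each of the four indices appears in exactly three factors of $r_\gamma$, and the exponents attached to $i$ and to $i'$ always sum to $q$, while the exponents attached to $k$ and $k'$ sum to $p+1+a+c$ and $p+1+b+d$ with $a+b+c+d = 2(q-1-p)$. Using Cauchy--Schwarz (and, when needed, Young's inequality) to redistribute the mixed exponents $a,b,c,d$ between the two pair-sums, followed by the translation $m=i-i'$, reduces $S_N$ to products of one-dimensional sums of the form $N^{-1}\sum_{|m|<N}|r_\gamma(m)|^q$, to which Lemma \ref{lemma:estimqteforT_2} applies directly. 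In regime (i) the three error terms arise respectively from the contribution of the diagonals $i=i'$ or $k=k'$ (yielding $N^{-1}$), the boundary effects where one of the differences $i-k, i-k', i'-k, i'-k'$ concentrates near $0$ (yielding $N^{2\gamma-2}$), and the tail $\sum_{|z|>N}|r_\gamma(z)|^q \asymp N^{2\gamma q-2q+1}$. In regime (ii) the logarithmic divergence of $\sum |r_\gamma|^q$ is absorbed by the extra $\log N$ in the denominator, leaving the $\log(N)^{-1}$ bound; in regime (iii) the summability of $\sum|r_\gamma|^q$ makes $S_N$ a Riemann sum converging to a finite double integral, so $S_N = O(1)$.

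The main obstacle is purely combinatorial: the decoupling step must distribute the six mixed exponents $(p+1,p+1,a,b,c,d)$ so that each of the surviving factors of $|r_\gamma|$ ends up with exponent at most $q$, with each index appearing in a translation-invariant pairing. This is the step executed in detail in the proof of Theorem 4 of \cite{NoPe1} and Proposition 3.1 of \cite{BretonNourdin}, from which the quantitative bounds of the three regimes are extracted; the present Lemma is a direct application of those arguments to the single fractional direction, the tensor product over $\alpha$ and $\beta$ being handled in the main theorem via $a_N \cdot b_M$.
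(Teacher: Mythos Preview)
Your treatment of parts (i) and (ii) is essentially the paper's own argument: the paper also reduces the inner products to the correlation function $r_\gamma$ and then simply cites \cite{NoPe1} and \cite{BretonNourdin} for the detailed estimates, so on those two points your proposal and the paper coincide.

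There is, however, a genuine slip in part (iii). Your claim that all three regimes reduce to bounding the \emph{same} quantity $S_N := N^{-2}\sum(\cdots)$ is only valid for (i) and (ii): in (i) the prefactor is $N^{4q\gamma-2}\cdot N^{-4q\gamma}=N^{-2}$, and in (ii), since $4q\gamma=4q-2$, one again obtains $N^{-2}$ (times $(\log N)^{-2}$). In (iii) the prefactor is $N^{4q-4}\cdot N^{-4q\gamma}=N^{4q-4-4q\gamma}$, and for $\gamma>1-\tfrac1{2q}$ this exponent is strictly smaller than $-2$, so the object to be bounded is not $S_N$. In fact $S_N$ itself diverges in regime (iii): the Riemann-sum heuristic gives $\sum(\cdots)\asymp N^{4+2q(2\gamma-2)}$, hence $S_N\asymp N^{2+2q(2\gamma-2)}\to\infty$ since $2q(2\gamma-2)>-2$. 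Relatedly, the phrase ``the summability of $\sum|r_\gamma|^q$'' is inverted: for $\gamma>1-\tfrac1{2q}$ one has $q(2\gamma-2)>-1$ and $\sum_z|r_\gamma(z)|^q$ diverges; it is precisely this divergence that makes the Riemann-sum interpretation the right one. The fix is simple and is exactly what the paper does: in (iii) keep the normalisation $N^{4q-4-4q\gamma}$, replace $r_\gamma(z)$ by its asymptotic $\gamma(2\gamma-1)|z|^{2\gamma-2}$, and recognise the resulting expression as a Riemann sum over $[0,1]^4$ converging to a finite integral.

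A minor point: the identity in your first display should read $\langle \mathbf{1}_{\Delta i},\mathbf{1}_{\Delta j}\rangle_{\mathcal{H}^\gamma}=N^{-2\gamma}r_\gamma(i-j)$, without the extra $\tfrac12$, since $r_\gamma$ as defined in Lemma~\ref{lemma:estimqteforT_2} already carries that factor.
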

\begin{proof}
The first point is proved in \cite[p.~102, 105]{NoPe1}. The point (ii) is done in \cite[p.~491-492]{BretonNourdin}. The last case can be treated in the following way. The quantity  $\langle \mathbf{1}_{\Delta i}, \mathbf{1}_{\Delta i'}\rangle_{{\cal{H}}^{\gamma} }$ is equivalent with a constant times $N^{-2\gamma} \vert i-i'\vert ^{2\gamma -2} $ and the sum appearing in (iii) is then equivalent to
\begin{eqnarray*}
&&N^{4q-4}N^{-4\gamma q}   \sum_{i,i',k,k'=0}^{N-1}\vert i-i'\vert ^{(2\gamma -2)(p+1) }\vert k-k'\vert ^{(2\gamma -2)(p+1) }\\
&&\times \vert i-k\vert ^{(2\gamma -2)a }\vert i'-k'\vert ^{(2\gamma -2)b }\vert i'-k\vert ^{(2\gamma -2)c }\vert i-k'\vert ^{(2\gamma -2)d }\\
&=&N^{-4}  \sum_{i,i',k,k'=0}^{N-1}N^{-2q(2\gamma-2)}\vert i-i'\vert ^{(2\gamma -2)(p+1) }\vert k-k'\vert ^{(2\gamma -2)(p+1) }\\
&&\times \vert i-k\vert ^{(2\gamma -2)a }\vert i'-k'\vert ^{(2\gamma -2)b }\vert i'-k\vert ^{(2\gamma -2)c }\vert i-k'\vert ^{(2\gamma -2)d },
\end{eqnarray*}
for $N$ large enough and this is a Riemann sum which converges to a constant.
\qed\end{proof}

\section*{Acknowledgments}
 Anthony R\'eveillac is grateful to DFG Research center Matheon project E2 for financial support. Ciprian Tudor would like to acknowledge generous support from the Alexander von Humboldt Foundation which made possible several research visits at the Humboldt Universit\"at zu Berlin.
\footnotesize

\end{document}